\documentclass{svjour3}
\usepackage[english]{babel}
\usepackage[margin=3cm]{geometry}
\usepackage[]{algorithm2e,subfig}
\usepackage{amsmath,amssymb,mathtools} 
\usepackage{cite,enumerate,float,pgfplots}
\usepackage{courier,dsfont}
\usepackage[utf8]{inputenc}

\title{Error estimation for the time to a threshold value in evolutionary partial differential equations}


\author{ Jehanzeb H. Chaudhry \and Donald Estep \and Trevor Giannini \and Zachary Stevens \and Simon J. Tavener}
\date{\today}
\institute{J. H. Chaudhry \and T. Giannini \and Z. Stevens \at  The University of New Mexico, Albuquerque, NM 87131, USA. \email{jehanzeb@unm.edu, tgianni@math.unm.edu, zstevens@unm.edu}
\and
S. J. Tavener \at Colorado State University,  Fort Collins, CO 80523, USA. \email{tavener@math.colostate.edu}
\and
D. Estep \at Simon Fraser University, Burnaby, BC V5A 1S6, Canada. \email{donald\_estep@sfu.ca}
}


\begin{document}
\maketitle


\begin{abstract}

We develop an \textit{a posteriori} error analysis for a numerical estimate of the time at which a functional of the solution to a partial differential equation (PDE) first achieves a threshold value on a given time interval. This  quantity of interest (QoI) differs from classical QoIs which are modeled as bounded linear (or nonlinear) functionals {of the solution}. Taylor's theorem and an adjoint-based \textit{a posteriori} analysis is used to derive computable and accurate error estimates in the case of semi-linear parabolic and hyperbolic PDEs. The accuracy of the error estimates is demonstrated through numerical solutions of the one-dimensional heat equation and linearized shallow water equations (SWE), representing parabolic and hyperbolic cases, respectively.
\end{abstract}



\section{Introduction}
\label{sec:Introduction}

Numerical solutions of parabolic and hyperbolic differential equations are essential
to the study of physical phenomena that evolve over space and time. To meet the requirements of uncertainty quantification, adjoint-based methods for estimating the error in a quantity of interest that can be expressed as a linear or nonlinear functional of the solution are well developed. However, the \emph{time} at which a functional of the solution achieves a threshold value, which we refer to as the time to an ``event'', is often the primary problem for a computational study of a physical system. Examples include: the time at which the concentration of a chemical species reaches a critical value at a particular location,  the time at which the temperature at a particular location drops below a critical value, and the time at which a traveling wave reaches a certain location given specific initial values and topography. Unfortunately, the error in an estimated time to a particular event cannot be quantified using standard approaches.

Consider a semi-linear evolutionary PDE of the form
\begin{equation} \label{eq:semilinear_evo_PDE}
u_t(x,t) + L u(x,t) = f(u,x,t), \quad x \in \Omega, \; t \in (0,T],
\end{equation}
with appropriate initial and boundary conditions. Here $u_t=\frac{\partial u}{\partial t}$, $L$ is a differential operator in $x$ that is linear in $u$, and $f(u,x,t)$ is a differentiable function.
While $u, L$ and $f$ may be scalar or vector valued, these cases are not distinguished in our analyses since this will be obvious from the context.

Classical \textit{a posteriori} error analysis considers quantities of interest (QoIs) that can be expressed as bounded functionals of the solution and  utilizes variational analysis, generalized Green's functions and computable residuals \cite{Est95, AinOde00, EstLarWil00, BecRan01, GilSul02, BanRan03, Bar04, EstHolMik02, CaoPet04, houston2017adjoint}. Recent extensions to multiscale and multiphysics systems include \cite{CarEstTav08, Est09, EstGinTav12, JohChaCarEstGinLarTav15, Cha18}.  Finite difference and volume methods, and explicit time integration schemes may be analyzed by reformulating these schemes as equivalent finite element methods, see e.g., \cite{ColEstTav14, ChaEstGinShaTav15, ChaEstGinTav15, ColEstTav15, ChaEstTavCarSan16, ChaColSha17, ChaShaWil19}.  Nonlinear QoIs are typically handled by linearization around a computed solution, e.g., \cite{BecRan01}. 
The key point motivating this paper is that the  \emph{time} a functional of the solution of a PDE first achieves a threshold value cannot be expressed as a bounded linear functional of $u$, nor can it be trivially linearized.

Let $G(u;t)$ be a linear functional of $u(x,t)$ which is implicitly dependent on $t$ through $u$, and let $R$ be a threshold value of $G(u;t)$. Assume there are one or more times $t^\star$ during the interval $(0,T]$ for which $G(u;t^\star) = R$ is satisfied and define the time $H(u,\tau)$ as
\begin{equation} \label{eq:defn_H}
H(u,\tau) = \min_{t \in (\tau, T]} \underset{t}\arg ( G(u; t) = R ).
\end{equation}
Here $\tau$ is specified in order to obtain different occurrences of the event $G(u; t)=R$. For example, $H(u,0)$ is the  time  on the interval (0,T] of the \emph{first} occurrence of the event of the $G(u; t) = R$. Similarly, the time the \emph{second} occurrence of the event $G(u; t) = R$ is $H(u,\tau)$, where $H(u,0) \leq \tau < H(u,H(u,0))$, i.e., for any $\tau$ between the times of first and second occurrence of the event. The quantity of interest $Q(u)$ for a fixed $\tau$ is defined as
\begin{equation} \label{eq:QoI_time_to_event}
Q(u) = H(u,\tau).
\end{equation}

In recent work \cite{ChaEstSteTav21}, an \textit{a posteriori} error analysis was performed in order to estimate the error in the time to an event (the time of the first occurrence of a functional achieving a threshold value), in the context of ordinary differential equations (ODEs). In the current work, the analysis of \cite{ChaEstSteTav21} is extended to evolutionary PDEs and to the QoI specified by \eqref{eq:QoI_time_to_event}. An error estimate is derived using Taylor series linearized around the approximate (computed) value of the QoI. The error estimate, which contains terms involving the unknown error of the solution to the PDE, is made computable through the use of solutions to certain adjoint problems. In comparison with the earlier work of \cite{ChaEstSteTav21}, the current analysis requires the solution of an additional adjoint problem arising due to the presence of the spatial operator $L$. We apply the analysis to the one-dimensional heat equation and to the one-dimensional linearized shallow water equations (SWE).

Numerical schemes to solve the semi-linear evolutionary PDE \eqref{eq:semilinear_evo_PDE} are developed in \S \ref{sec:Numerical_Methods}. A variational formulation is introduced in \S \ref{sec:Variational_Formualation} and a space-time Galerkin finite element discretization in \S \ref{sec:Galerkin_Method}. The error in QoI \eqref{eq:QoI_time_to_event} is defined in \S \ref{sec:Error_Estimate} and the  \emph{a posteriori} error analysis for this QoI is presented in \S \ref{sec:Error_Estimate_Threshold}. A computable estimate is developed in \S\ref{sec:Error_Estimate_Computable} by taking advantage of classical error estimates which are reviewed in \S \ref{sec:Error_Estimate_Classic}. Error estimates for two model problem are developed in \S \ref{sec:Model_Problems}, the one-dimensional heat equation in \S \ref{sec:Heat_Equation} and the one-dimensional linearized shallow water equations  in \S \ref{sec:SWE}, respectively. Numerical results for both the linear and nonlinear heat equations and for the linearized shallow water equations with a range of different bottom topographies are presented in \S \ref{sec:Numerical_Results}.


\section{Numerical methods}
\label{sec:Numerical_Methods}

Adjoint-based, \textit{a posteriori} error estimates utilize the variational (or weak) form of PDEs. A variational formulation for the semi-linear evolutionary PDE \eqref{eq:semilinear_evo_PDE} is introduced in \S \ref{sec:Variational_Formualation} and a space-time Galerkin finite element discretization is presented in \S \ref{sec:Galerkin_Method}.

\subsection{Variational formulation}
\label{sec:Variational_Formualation}
Let $W$ be a generic Hilbert space over the domain $\Omega$ with inner product  $\left(\cdot,\cdot \right)_W$ and the induced W-norm $\| \cdot \|_W$.  Let $L^2(0,T;W)$ denote the space of functions $w$ whose $W$-norm is square integrable $\forall t \in [0,T]$. The variational form of \eqref{eq:semilinear_evo_PDE} is: Find $u \in L^2(0,T;W)$ such that
\begin{equation} \label{eq:time_evo_weak}
\left( u_t(\cdot,t), v(\cdot,t) \right) + \left( L_1 u(\cdot,t), L_2 v(\cdot,t) \right)
  = \left( f(u,\cdot,t), v(\cdot,t) \right), \quad \forall v \in W, \quad \forall t \in (0,T],
\end{equation}
where $f(u,\cdot,t)\in W$ for any $t\in(0,T]$ and $\left( a, b \right) = \int_{\Omega} a\cdot b \; {\rm d}\Omega$ is the spatial $L^2$ inner product over $\Omega$. The two operators $L_1,L_2$ are linear differential operators such that
\begin{equation} \label{eq:operator_split}
L_2^* L_1 u = L u.
\end{equation}
The operator $L_2^*$ is the formal adjoint of $L_2$ which satisfies the property that
\begin{equation} \label{eq:formal_adjoint}
    \left( u, L_2 v \right) = \left( L_2^* u, v \right), \qquad \forall u \in W, v \in W.
\end{equation}
Under suitable conditions on the operators $L_1$ and $L_2$ (and hence $L$), \eqref{eq:time_evo_weak} is well-posed; the reader is referred to \cite{evans10} for more details. Although the differential operator $L_1$ may be of lower order than $L$, if a solution $u$ to \eqref{eq:time_evo_weak} is sufficiently differentiable, it also satisfies \eqref{eq:semilinear_evo_PDE}.


\subsection{Space-time Galerkin finite element method}
\label{sec:Galerkin_Method}

Let $\mathcal{T}_h$ be a shape-regular triangulation of the spatial domain $\Omega$ where $h$ denotes the diameter of the largest element. For the numerical examples in \S \ref{sec:Numerical_Results}, $\Omega$ is one-dimensional and $\mathcal{T}_h$ is a uniform set of $N_x$ subintervals of the domain where $h$ denotes the width of the subintervals. Let $V^{q_s}$ be the standard space of continuous piecewise polynomials of degree $q_s$ on $\mathcal{T}_h$. 
This space may be scalar or vector valued depending upon the context.

The time domain $[0, T]$ is discretized into $N_t$ uniform subintervals of width $p$ denoted $\mathcal{T}_p$, i.e.,  $\mathcal{T}_p = \{I_0, \dots, I_n, \dots, I_{N_t-1}\}$ where $I_n = [t_n, t_{n+1}]$. Let $l_j(t)$ be the $j$th degree Lagrange polynomial in time on the interval $I_n$. For the space-time slab $S_n = \Omega \times I_n$ the solution space is defined as 
\begin{equation} \label{eq:FE_space}
W_{n}^{q_t, q_s} = \left \{ w(x,t) \, | \, w(x,t) = {\sum_{j = 0}^{q_t} l_j(t) v_j(x), \; v_j(x)} \in V^{q_s}, \, (x,t)\in S_n \right \}.
\end{equation}
%
We define the continuous Galerkin method, cG($q_t,q_s$) as: Find a continuous in time $U \in W_{n}^{q_t,q_s}$ {for $n=0,\dots,N_t-1$,} such that 
\begin{equation} \label{eq:cG_forward}
  \int_{t_n}^{t_{n+1}}\Big[ \Big( U_t(\cdot, t), v(\cdot, t )\Big) + \Big( L_1 U(\cdot, t), L_2 v(\cdot, t) \Big) \Big] \; {\rm d}t
= \int_{t_n}^{t_{n+1}} \Big( f(\cdot, t), v(\cdot, t) \Big) \; {\rm d}t, \hspace{0.5cm} \forall  v \in W_{n}^{q_t-1, q_s}.
\end{equation}



\section{\emph{A posteriori} error estimation}
\label{sec:Error_Estimate}

Let $u(x,t)$ be the true solution of \eqref{eq:semilinear_evo_PDE} and $G(u; t)$ be a time-dependent linear functional of $u(x,t)$ that may be expressed in terms of the inner product,
\begin{equation} \label{eq:G}
G(u; t) = \int_{\Omega} w(x)\cdot u(x,t) \; {\rm d}x =\left( w(\cdot), u(\cdot,t) \right),
\end{equation}
where $w(x)$ is some differentiable weight function. Assume that $\left.w(x)\right|_{\partial\Omega} =0$. For a chosen threshold $R$ and fixed value of $\tau$ in \eqref{eq:defn_H}, denote the true value of the QoI
\begin{equation} \label{def:t_t}
  t_t:=Q(u).
\end{equation}
Let $U(x,t)$ be an approximation of the solution satisfying \eqref{eq:cG_forward} and
\begin{equation} \label{def:t_c}
t_c := Q(U),
\end{equation}
the corresponding computed value of the QoI.  Figure \ref{fig:Reference_QoI_SWE_Constant_ThirdEvent_CloseUp} illustrates these definitions. We denote the error in the approximate \emph{solution} as $e(x,t) = u(x,t) - U(x,t)$. As noted in \S \ref{sec:Introduction}, the \textit{a posteriori} error analysis for functionals of $e(x,t)$ is well established. However, we seek to  estimate the error
\begin{equation} \label{eq:error}
e_Q = t_t-t_c,
\end{equation}
and specifically, to calculate a \emph{computable} estimate $\nu \approx e_Q$. We define the \emph{effectivity ratio} of this estimate  to be
\begin{equation} \label{eq:effectivity}
\rho_{\rm eff}=\frac{\nu}{e_Q},
\end{equation}
where clearly the goal is to construct an error estimate for which $\rho_{\rm eff} \approx 1$. We first derive an error representation in \S \ref{sec:Error_Estimate_Threshold}. Standard \emph{a posteriori techniques} (see \S \ref{sec:Error_Estimate_Classic}) are then employed to approximate the error in three linear functionals of $e(x,t)$. The errors in these three functionals are calculated as the inner product of solutions to three distinct adjoint problems and computable residuals as described in \S \ref{sec:Error_Estimate_Computable}.

\begin{figure}[ht]
\centering
\subfloat[ {$G(u;t)$ for $t \in [0,200]$} ]{
\includegraphics[width=8cm]{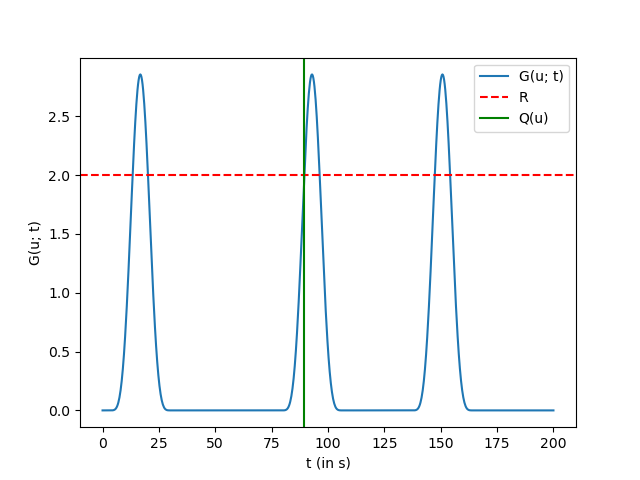} }
\hfill
\subfloat[ {$G(u;t)$ and $G(U;t)$ for $t \in [89,91]$} ]{
\includegraphics[width=8cm]{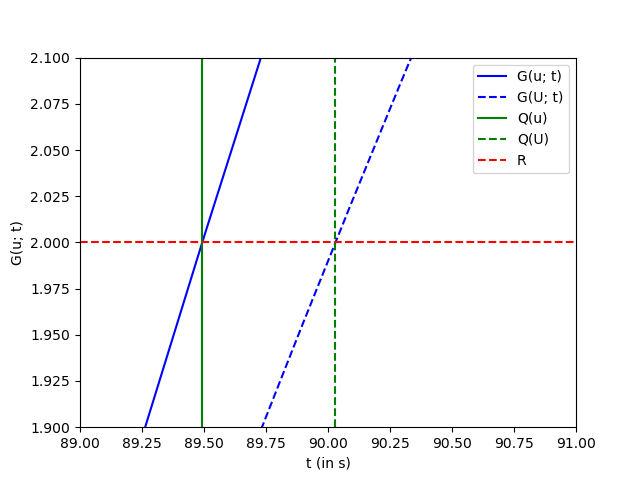} }
\caption{(a) Reference values of the functional $G(u;t)$ for \S \ref{sec:SWE_Constant}. The QoI is the time for the third occurrence of the event $G(u;t)=R$. (b) Detail near the time of third occurrence of the event $G(u;t)=R$ showing the true and approximate values of the functional, $G(u;t)$ and $G(U;t)$ respectively, and the corresponding true and approximate values of the QoI, $Q(u)$ and $Q(U)$. The error  is $e_Q = Q(u)-Q(U)$.}
\label{fig:Reference_QoI_SWE_Constant_ThirdEvent_CloseUp}
\end{figure}

\subsection{Error estimation for the time to an event $G(u;t)=R$}
\label{sec:Error_Estimate_Threshold}

An adjoint-based \emph{a posteriori} error estimate for $e_Q$ as defined in \eqref{eq:error} is presented in Theorem \ref{thm:NS_error_rep} below.

\begin{theorem}\label{thm:NS_error_rep}
Consider the QoI defined by \eqref{eq:QoI_time_to_event}, where the functional $G$ is defined by \eqref{eq:G}, and $t_t$, $t_c$, and the error in the QoI, $e_Q$, are defined by \eqref{def:t_t}, \eqref{def:t_c} and \eqref{eq:error} respectively.  
Let 
\begin{equation} 
\label{eq:denom_err_rep}
\mathcal{D}(U,t_c) = \left(L_2w(\cdot), L_1U(\cdot, t_c) \right) - \left(w(\cdot),f(U,\cdot, t_c) \right) - \left(\nabla_uf^\top(U,\cdot,t_c)w(\cdot),e(\cdot,t_c) \right) + \left(L_2w(\cdot), L_1e(\cdot,t_c)\right),
\end{equation}
where
$e(x,t) = u(x,t) - U(x,t)$, and  {$\nabla_uf$ is a square matrix with entries $[\nabla_uf]_{ij} = \partial f_i / \partial u_j$}. For semi-linear time evolution PDEs \eqref{eq:semilinear_evo_PDE}, the error in the QoI is given by
\begin{equation}\label{eq:err_rep}
\begin{aligned}
&e_Q = t_t-t_c  =  \frac{\left(w(\cdot), e(\cdot, t_c) \right) + \left( w(\cdot), \mathcal{R}_1(\cdot,t_c,t_t) \right)}{ \mathcal{D}(U,t_c)  - \left(w(\cdot), \mathcal{R}_2(u,U,\cdot,t_c)  \right) }, 
\end{aligned}
\end{equation}
where the two remainder terms are $\mathcal{R}_1(x,t_c,t_t) = \mathcal{O}\left((t_t-t_c)^2\right)$ and $\mathcal{R}_2(u,U,x,t_c) = \mathcal{O}\left((u-U)^2\right)$.
\end{theorem}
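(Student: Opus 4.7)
The plan is to start from the defining identity $G(u;t_t) = R = G(U;t_c)$ and produce an equation for $t_t-t_c$ via a time Taylor expansion of $u(\cdot, t_t)$ about $t_c$, then use the PDE \eqref{eq:semilinear_evo_PDE} to replace the resulting time derivative by spatial terms, and finally linearize the nonlinearity $f$ about $U$. This mirrors the ODE argument of \cite{ChaEstSteTav21} but with the extra ingredient supplied by the spatial operator $L$ and the splitting $L = L_2^* L_1$.

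First I would write $(w, u(\cdot,t_t)) = (w, U(\cdot,t_c))$ and expand $u(\cdot,t_t) = u(\cdot,t_c) + u_t(\cdot,t_c)(t_t-t_c) + \tilde{\mathcal{R}}_1(\cdot,t_c,t_t)$, where the remainder is $\mathcal{O}((t_t-t_c)^2)$ by Taylor's theorem. Substituting $u(\cdot,t_c) = U(\cdot,t_c) + e(\cdot,t_c)$ and defining $\mathcal{R}_1 := \tilde{\mathcal{R}}_1$ yields
\begin{equation*}
\bigl(w, e(\cdot,t_c)\bigr) + \bigl(w, u_t(\cdot,t_c)\bigr)(t_t-t_c) + \bigl(w, \mathcal{R}_1(\cdot,t_c,t_t)\bigr) = 0.
\end{equation*}
Solving for $t_t-t_c$ would be trivial at this point, but $u_t(\cdot,t_c)$ is not computable, so the next step is to eliminate it using \eqref{eq:semilinear_evo_PDE}: $u_t(\cdot,t_c) = -L u(\cdot,t_c) + f(u,\cdot,t_c)$.

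Next I would convert each term into something expressed through $U$, $e$, and the splitting. Using $L = L_2^* L_1$ and the formal adjoint identity \eqref{eq:formal_adjoint} (valid since $w|_{\partial\Omega}=0$), I would rewrite $(w, Lu(\cdot,t_c)) = (L_2 w, L_1 u(\cdot,t_c)) = (L_2 w, L_1 U(\cdot,t_c)) + (L_2 w, L_1 e(\cdot,t_c))$. For the nonlinear term I would apply Taylor's theorem in the $u$-argument: $f(u,\cdot,t_c) = f(U,\cdot,t_c) + \nabla_u f(U,\cdot,t_c)\,e(\cdot,t_c) + \tilde{\mathcal{R}}_2(u,U,\cdot,t_c)$ with $\tilde{\mathcal{R}}_2 = \mathcal{O}((u-U)^2)$, then pair with $w$ to get $(w,\nabla_u f(U)\,e) = (\nabla_u f^{\top}(U)\,w,\, e)$. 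Setting $\mathcal{R}_2 := \tilde{\mathcal{R}}_2$ and collecting terms yields exactly $-(w, u_t(\cdot,t_c)) = \mathcal{D}(U,t_c) - (w,\mathcal{R}_2(u,U,\cdot,t_c))$.

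Substituting this identity back into the linearized constraint and dividing by the coefficient of $t_t-t_c$ then gives \eqref{eq:err_rep}; the sign flip between the two displayed expressions accounts for the change of sign in the denominator of \eqref{eq:err_rep}. The main obstacle I anticipate is bookkeeping rather than analysis: keeping the two distinct Taylor remainders $\mathcal{R}_1$ and $\mathcal{R}_2$ clearly separated (one in time about $t_c$, the other in state about $U$), making sure the adjoint identity \eqref{eq:formal_adjoint} applies in the strong sense required for the true solution $u$, and verifying that the denominator $\mathcal{D}(U,t_c) - (w,\mathcal{R}_2)$ is nonzero at the event time, which is the transversality-type condition implicit in the uniqueness of $t_c$ as a root of $G(U;t) = R$ near $t_t$.
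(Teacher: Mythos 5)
Your proposal is correct and follows essentially the same route as the paper's proof: Taylor expansion of $u(\cdot,t_t)$ about $t_c$ paired with $w$, elimination of $u_t(\cdot,t_c)$ via the weak form $\left(w, u_t\right) = -\left(L_2 w, L_1 u\right) + \left(w, f(u,\cdot,t_c)\right)$, splitting $u = U + e$, and a second Taylor expansion of $f$ about $U$. The only cosmetic difference is that you invoke the strong form plus the formal adjoint identity where the paper uses the variational form directly, and your closing remark on the nonvanishing of the denominator makes explicit a transversality condition the paper leaves implicit.
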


\begin{proof}
Using the definitions of $t_t$ and $t_c$,
\begin{equation} \label{eq:def_tt_tc}
G(u(t_t)) - G(U(t_c)) = R - R = 0.
\end{equation}
From \eqref{eq:def_tt_tc} and \eqref{eq:G}
\begin{equation*}
\left( w(\cdot), u(\cdot,t_t) - U(\cdot,t_c)  \right) = 0.
\end{equation*}
Expanding the functional $G(u(t_t))$ about $t_c$ by expanding $u(x, t_t)$ around $t_c$ using Taylor Series,
\begin{equation} \label{eq:rearrange_this}
\begin{aligned}
0
&= \left(w(\cdot), u(\cdot, t_c) + (t_t - t_c)u_t(\cdot, t_c) + \mathcal{R}_1(\cdot, t_c, t_t)  - U(\cdot, t_c) \right)  \\
&= \left(w(\cdot), e(\cdot, t_c) \right) + (t_t - t_c) \left(w(\cdot) , u_t(\cdot, t_c) \right) + \left( w(\cdot), \mathcal{R}_1(\cdot, t_c, t_t) \right),
\end{aligned}
\end{equation}
where $e(x,t) = u(x,t) - U(x,t)$ and the remainder $\mathcal{R}_1(x, t_c, t_t)$ is $\mathcal{O}\left((t_t-t_c)^2\right)$. Rearranging \eqref{eq:rearrange_this},
\begin{equation} \label{eq:err_rep1}
t_t-t_c  =  - \frac{\left(w(\cdot), e(\cdot, t_c) \right) + \left( w(\cdot), \mathcal{R}_1(\cdot,t_c,t_t) \right)}{\left(w(\cdot), u_t(\cdot,t_c)\right)}.
\end{equation}
From the weak formulation of the PDE \eqref{eq:time_evo_weak},
\begin{equation} \label{eq:replace_ut}
\begin{aligned}
\left(w(\cdot), u_t(\cdot,t_c)\right)
&= -\left( L_2 w(\cdot), L_1 u(\cdot,t_c)\right) + \left( w(\cdot), f(u,\cdot,t_c) \right)  \\
&= -\left( L_2 w(\cdot), L_1 U(\cdot,t_c)\right) - \left( L_2 w(\cdot), L_1 e(\cdot,t_c)\right)
     + \left( w(\cdot), f(u,\cdot,t_c) \right).
\end{aligned}
\end{equation}

Using Taylor's Theorem once more to expand $f(u,x,t_c)$ around $U(x,t_c)$,
\begin{align}
    \left(w(\cdot), f(u,\cdot,t_c)\right)
    &= \left(w(\cdot), f(U,\cdot,t_c)\right) + \left(\nabla_uf^\top(U,\cdot,t_c)w(\cdot),e(\cdot,t_c)\right) + \left(w(\cdot), \mathcal{R}_2(u,U,\cdot,t_c) \right),
    \label{eq:f_expand2}
\end{align}
where the remainder $\mathcal{R}_2(u,U,x,t_c)$ is of order $\mathcal{O}\left((u-U)^2\right)$. Substituting \eqref{eq:f_expand2} into \eqref{eq:replace_ut} and then into \eqref{eq:err_rep1} yields the final result.

\end{proof}

\begin{corollary}\label{cor:lin_err}
If the differential equation \eqref{eq:semilinear_evo_PDE} is linear, so that $f=f(x,t)$ does not depend on $u$, then the error representation is
\begin{equation}\label{eq:err_rep_lin}
t_t-t_c  =
\frac{\left(w(\cdot), e(\cdot, t_c) \right) + \left( w(\cdot), \mathcal{R}_1(\cdot,t_c,t_t) \right)}
     {\left(L_2 w(\cdot), L_1 U(\cdot, t_c) \right) - \left( w(\cdot), f(\cdot, t_c) \right)
        +  \left(L_2 w(\cdot), L_1 e(\cdot,t_c)\right) }.
 \end{equation}
\end{corollary}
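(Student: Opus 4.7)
The plan is to derive the corollary as a direct specialization of Theorem \ref{thm:NS_error_rep} rather than repeating the Taylor-expansion argument from scratch. First I would observe that linearity of the PDE means $f(u,x,t) = f(x,t)$ has no dependence on $u$, so the Jacobian $\nabla_u f(U,\cdot,t_c)$ vanishes identically. This immediately eliminates the term $\left(\nabla_u f^\top(U,\cdot,t_c) w(\cdot), e(\cdot,t_c)\right)$ that appears in the definition of $\mathcal{D}(U,t_c)$ in \eqref{eq:denom_err_rep}.

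Next, I would examine the Taylor expansion \eqref{eq:f_expand2} of $f(u,x,t_c)$ about $U(x,t_c)$. Because $f$ does not depend on its first argument, the expansion is trivially exact: $f(u,x,t_c) = f(x,t_c) = f(U,x,t_c)$, with all higher-order terms zero, so the remainder $\mathcal{R}_2(u,U,x,t_c)$ vanishes identically (not merely as $\mathcal{O}((u-U)^2)$, but exactly). Consequently, the term $\left(w(\cdot), \mathcal{R}_2(u,U,\cdot,t_c)\right)$ in the denominator of \eqref{eq:err_rep} is zero.

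Finally, I would substitute these two simplifications into the error representation \eqref{eq:err_rep} and replace $f(U,\cdot,t_c)$ by $f(\cdot,t_c)$ throughout. The numerator is unchanged, and the denominator reduces to
\[
\left(L_2 w(\cdot), L_1 U(\cdot,t_c)\right) - \left(w(\cdot), f(\cdot,t_c)\right) + \left(L_2 w(\cdot), L_1 e(\cdot,t_c)\right),
\]
which is precisely the denominator in \eqref{eq:err_rep_lin}. There is essentially no obstacle here; this is a direct corollary, and the only point worth flagging explicitly is that in the linear case $\mathcal{R}_2 \equiv 0$ exactly, so the remainder-in-the-denominator caveat drops away and only the temporal remainder $\mathcal{R}_1$ contributes to the approximation error in the representation formula.
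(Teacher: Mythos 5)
Your proposal is correct and matches the paper's (implicit) derivation: the corollary is stated without proof precisely because it is the direct specialization of Theorem \ref{thm:NS_error_rep} obtained by setting $\nabla_u f \equiv 0$ and $\mathcal{R}_2 \equiv 0$ in \eqref{eq:denom_err_rep} and \eqref{eq:err_rep}. Your added observation that $\mathcal{R}_2$ vanishes exactly (not merely to higher order) in the linear case is accurate and consistent with the paper's later remark that $\mathcal{E}_3 \equiv 0$ for linear problems.
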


\subsection{Error estimation for a linear functional}
\label{sec:Error_Estimate_Classic}

In order to define a computable error estimate, standard adjoint-based \emph{a posteriori} error analysis is employed to approximate the three terms in \eqref{eq:err_rep} that are linear functionals of $e(x,t)$. The standard analysis is presented for completeness in Theorem \ref{thm:standard_error_rep} below.

\begin{theorem}\label{thm:standard_error_rep}
Given a numerical solution $U(x,t)$ to \eqref{eq:semilinear_evo_PDE} and data $\psi(x)$, for any $\hat{t} \in (0,T]$ the error $\left(\psi(\cdot),e(\cdot,\hat{t})\right)$ is given by
\begin{equation}\label{eq:standard_error_rep}
        \left(\psi(\cdot),e(\cdot,\hat{t})\right) = \left(\phi(\cdot, 0), e(\cdot, 0)\right) + \int_{0}^{\hat{t}}
        \left(\phi(\cdot,t),f(U,\cdot,{t})-U_t(\cdot,{t}) \right)\; {\rm d}t - \int_{0}^{\hat{t}}\left(L_2\phi(\cdot,{t}),L_1U(\cdot,{t}) \right)\; {\rm d}t
\end{equation}
where $\phi(x,t)$ is the solution of the adjoint problem
\begin{equation} \label{eq:strong_adj}
\begin{aligned}
-\phi_t(x, t) { + L^* \phi(x, t)} & =  A_{u,U}^* \phi(x,t), \quad &&x \in \Omega, \quad         &t \in [0,\hat{t}),  \\
 \phi(x, t)   & = 0, \qquad\qquad\;     &&x \in \partial\Omega, \quad  &t \in [0,\hat{t}),  \\
 \phi(x, \hat{t})  & = \psi(x), \qquad\quad  &&x \in \Omega. \\
\end{aligned}
\end{equation}
The operator $A_{u,U}^*$ is the adjoint of the linear operator
\begin{equation}
A_{u,U} = \int_0^1 \frac{df}{dz}(z,x,t) \; {\rm d}s,
\end{equation}
where $z=su+(1-s)U$.
\end{theorem}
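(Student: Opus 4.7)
The plan is to proceed by the standard adjoint duality argument: represent the linear functional of the error as a time-boundary term, differentiate in time, and use the forward weak form together with the adjoint equation to collapse the resulting expression into the initial error term plus weak residuals of $U$.

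First I would use the terminal condition $\phi(\cdot,\hat t)=\psi$ to rewrite the left-hand side as $(\phi(\cdot,\hat t),e(\cdot,\hat t))$, and then apply the fundamental theorem of calculus in time,
\begin{equation*}
\bigl(\phi(\cdot,\hat t),e(\cdot,\hat t)\bigr)
= \bigl(\phi(\cdot,0),e(\cdot,0)\bigr) + \int_0^{\hat t} \frac{d}{dt}\bigl(\phi(\cdot,t),e(\cdot,t)\bigr)\,{\rm d}t.
\end{equation*}
Expanding the derivative by the product rule gives two pieces, $(\phi_t,e)$ and $(\phi,e_t)$, which I would treat separately.

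For $(\phi_t,e)$ I would invoke the adjoint equation \eqref{eq:strong_adj} to substitute $\phi_t = L^*\phi - A_{u,U}^*\phi$, then use $L^*=L_1^*L_2$ arising from \eqref{eq:operator_split} and \eqref{eq:formal_adjoint} to rewrite $(L^*\phi,e)=(L_2\phi,L_1 e)$, and move $A_{u,U}^*$ to the other side of the inner product. For $(\phi,e_t)$ I would split $e_t=u_t-U_t$ and apply the variational identity \eqref{eq:time_evo_weak} with test function $\phi(\cdot,t)$ (permissible since $\phi$ vanishes on $\partial\Omega$) to replace $(\phi,u_t)$ by $(f(u,\cdot,t),\phi)-(L_1 u,L_2\phi)$. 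Combining these and regrouping the $L_1,L_2$ terms, the identity $L_1 e = L_1 u - L_1 U$ causes the $L_1 u$ contribution to cancel, leaving $-(L_2\phi,L_1 U)$.

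The heart of the argument, and where I expect the only real bookkeeping difficulty, is the cancellation of the nonlinear reaction contributions. The affine identity
\begin{equation*}
f(u,x,t)-f(U,x,t) = \int_0^1 \nabla_u f\!\left(sU+(1-s)u,x,t\right)(u-U)\,{\rm d}s \;=\; A_{u,U}\,e
\end{equation*}
is precisely what makes $A_{u,U}$ the correct linearization, so $(\phi,A_{u,U} e)=(\phi,f(u))-(\phi,f(U))$; this piece, moved over from the adjoint substitution, exactly cancels the $(f(u),\phi)$ coming from the forward weak form by symmetry of the inner product, leaving behind only $(\phi,f(U,\cdot,t))$. Collecting everything yields
\begin{equation*}
\frac{d}{dt}(\phi,e) = \bigl(\phi,\,f(U,\cdot,t)-U_t(\cdot,t)\bigr) - \bigl(L_2\phi,L_1 U\bigr),
\end{equation*}
and integrating from $0$ to $\hat t$ produces \eqref{eq:standard_error_rep}. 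The main obstacle is simply tracking signs and the adjoint identities consistently so that the $f(u)$ and $L_1 u$ terms (which are not computable) cancel, leaving only quantities involving the computed solution $U$ and the adjoint $\phi$.
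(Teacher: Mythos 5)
Your proposal is correct and follows essentially the same route as the paper's proof: the paper multiplies the adjoint equation by $e$, integrates over space--time, and integrates the $(\phi_t,e)$ term by parts, which is exactly your product-rule/fundamental-theorem-of-calculus bookkeeping in reverse, and both arguments then rely on the same three ingredients --- the factorization $L^* = L_1^* L_2$, the affine identity $A_{u,U}e = f(u,\cdot,t)-f(U,\cdot,t)$, and the forward weak form tested against $\phi$ --- to cancel the non-computable $f(u)$ and $L_1 u$ terms. The only cosmetic difference is your parametrization $z = sU+(1-s)u$ versus the paper's $z = su+(1-s)U$, which defines the same operator after the substitution $s \mapsto 1-s$.
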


\begin{proof}
Multiplying the adjoint equation \eqref{eq:strong_adj} by the error $e(x,t) = u(x,t)-U(x,t)$, and integrating over the space-time domain $\Omega \times [0,\hat{t}]$,
\begin{equation} \label{eq:int_of_adj}
0=\int_{0}^{\hat{t}} \left( \phi_t(\cdot, t), e(\cdot,t) \right)  \; {\rm d}t
  -\int_{0}^{\hat{t}} \left( L^* \phi(\cdot, t), e(\cdot,t) \right)  \; {\rm d}t +\int_{0}^{\hat{t}} \left( A_{u,U}^* \phi(\cdot,t), e(\cdot,t) \right)\; {\rm d}t.
\end{equation}
Integrating the first term of \eqref{eq:int_of_adj} by parts in time and enforcing the initial condition $\phi(x,\hat{t})=\psi(x)$, 
\begin{equation*}
    \int_{0}^{\hat{t}} \left( \phi_t(\cdot, t), e(\cdot,t) \right)  \; {\rm d}t 
    = {\left(\psi(\cdot) , e(\cdot, \hat{t}) \right)
      - \left(\phi(\cdot ,  0) , e(\cdot, 0) \right)
      - \int_{0}^{\hat{t}} \left(\phi(\cdot , t) , e_t(\cdot,t) \right)\; {\rm d}t. }
\end{equation*}
From \eqref{eq:operator_split}, $L^*=L_1^*L_2$, and the second term of \eqref{eq:int_of_adj} becomes
\begin{equation} \label{eq:L_adj_decomp}
   \int_{0}^{\hat{t}} \left( L^*\phi(\cdot,t), e(\cdot,t) \right)\; {\rm d}t
 = \int_{0}^{\hat{t}} \left( L_1^*L_2 \phi(\cdot,t), e(\cdot,t) \right)\; {\rm d}t
 = \int_{0}^{\hat{t}} \left( L_2\phi(\cdot,t), L_1e(\cdot,t) \right)\; {\rm d}t.
\end{equation}

The operator $A_{u,U}$ has the property
\begin{equation}\label{eq:A_property}
    A_{u,U}e(x,t) = f(u,x,t) - f(U,x,t),
\end{equation}
hence, using the property of the adjoint \eqref{eq:formal_adjoint} and \eqref{eq:A_property}, the third term of \eqref{eq:int_of_adj} becomes
\begin{equation} \label{eq:adj_op_err_property}
   \int_{0}^{\hat{t}} \left( A_{u,U}^* \phi(\cdot,t), e(\cdot,t) \right)\; {\rm d}t
 = \int_{0}^{\hat{t}} \left(\phi(\cdot,t), A_{u,U}e(\cdot,t) \right)\; {\rm d}t
 = \int_{0}^{\hat{t}} \left(\phi(\cdot,t), f(u,\cdot,t) - f(U,\cdot,t) \right)\; {\rm d}t.
\end{equation}
Combining \eqref{eq:int_of_adj}, \eqref{eq:L_adj_decomp}, and \eqref{eq:adj_op_err_property} yields,
\begin{equation*}
\begin{aligned}
0&= {
    \left(\psi(\cdot) , e(\cdot, \hat{t}) \right)
    - \left(\phi(\cdot ,  0) , e(\cdot, 0) \right)
    -\int_{0}^{\hat{t}} \left(\phi(\cdot , t) , e_t(\cdot,t) \right)\; {\rm d}t}, \\
    &\hspace{1cm} - \int_{0}^{\hat{t}} \left( L_2\phi(\cdot,t), L_1e(\cdot,t) \right)\; {\rm d}t + \int_{0}^{\hat{t}} \left(\phi(\cdot,t), f(u,\cdot,t) - f(U,\cdot,t) \right)\; {\rm d}t.
\end{aligned}
\end{equation*}
Since $e(x,t) = u(x,t)-U(x,t)$
\begin{equation*}
\begin{aligned}
&\left(\psi(\cdot) , e(\cdot,\hat{t}) \right) \\
&=\left( \phi(\cdot, 0), e(\cdot, 0) \right)
    + \int_{0}^{\hat{t}} \left( \phi(\cdot, t) , u_t(\cdot, t) - U_t(\cdot, t) \right) \; {\rm d}t \\
    & \hspace{1cm} {+} \int_{0}^{\hat{t}} \left( L_2 \phi(\cdot, t), L_1 u(\cdot ,t) \, {-} \, L_1 U(\cdot, t) \right) \; {\rm d}t
    \;{-}\int_{0}^{\hat{t}} \left(\phi(\cdot,t), f(u,\cdot,t) - f(U,\cdot,t) \right)\; {\rm d}t.
\end{aligned}
\end{equation*}
Noting that the exact solution $u(x,t)$ satisfies \eqref{eq:time_evo_weak} produces the desired result.

\end{proof}

In practice, since operator $A_{u,U}$ requires the true solution to \eqref{eq:semilinear_evo_PDE}, it is approximated by
\begin{equation*}
    A_{u,U} \approx \nabla_uf(U,x,t).
\end{equation*}
With this, the right side of the adjoint equation \eqref{eq:strong_adj} is approximated by
\begin{equation*}
    A_{u,U}^*\phi(x,t) \approx \left( \nabla_uf(U,x,t) \right)^{\top} \phi(x,t).
\end{equation*}

\begin{remark}
Theorem \ref{thm:standard_error_rep} is formulated in terms of the strong form of the adjoint problem \eqref{eq:strong_adj}, but clearly it is adequate for $\phi$ to satisfy the weak form of the problem
\begin{equation} \label{eq:adj_weak}
-\left( \phi(\cdot,t), v(\cdot) \right) + \left( L_2 \phi(\cdot,t), L_1 v(\cdot) \right) = \left( \phi(\cdot,t), A_{u,U}v(\cdot)  \right) \qquad \forall v \in H_0^1(\Omega),\forall t \in (0,\hat{t}].
\end{equation}
\end{remark}

\subsection{A computable error estimate for the time to an event $G(u;t)=R$}
\label{sec:Error_Estimate_Computable}

The magnitudes of the remainders $\mathcal{R}_1$ and $\mathcal{R}_2$ in the error representation \eqref{eq:err_rep} decrease more rapidly than other terms as the accuracy of the numerical solution increases (e.g., by increasing $N_x,N_t,q_t$ or $q_s$ in \S~\ref{sec:Galerkin_Method}). Setting both $\left( w(\cdot), \mathcal{R}_1(\cdot,t_c,t_t) \right) \approx 0$ and $\left( w(\cdot), \mathcal{R}_2(u,U,\cdot,t_c) \right) \approx 0$ gives a first approximation

\begin{equation}\label{eq:error_estimate_computable}
\begin{aligned}
t_t-t_c  &\approx  \frac{\left(w(\cdot), e(\cdot, t_c) \right)}{\mathcal{D}(U,t_c) }, 
\end{aligned}
\end{equation}
where $\mathcal{D}(U,t_c)$ is defined in \eqref{eq:denom_err_rep}.
The terms in the numerator and denominator containing the unknown error $e(x,t)$ may be estimated using Theorem \ref{thm:standard_error_rep}. However, {each of these} expressions have different stability properties, {their estimation requires} the solution to distinct adjoint problems as given below.

\bigskip
\noindent \textbf{First adjoint problem} To estimate $\left(w(\cdot),e(\cdot,t_c)\right)$, \eqref{eq:strong_adj} is solved backwards from $t_c$ with initial condition $\psi(x)=\psi^{(1)}(x)=w(x)$. Substituting the solution $\phi^{(1)}$ in \eqref{eq:standard_error_rep} provides a computable estimate for
\begin{equation*}
    \mathcal{E}_1 \approx \left(\psi^{(1)}(\cdot),e(\cdot,t_c)\right) = \left(w(\cdot),e(\cdot,t_c)\right).
\end{equation*}

\bigskip
\noindent \textbf{Second adjoint problem} To estimate $\left(L_2w(\cdot),L_1e(\cdot,t_c)\right)$, we assume $w(x)$ is smooth enough for $L^*w$ to be well-defined and solve \eqref{eq:strong_adj} backwards from $t_c$ with initial condition $\psi(x)=\psi^{(2)}(x)=L^*w(x)$. Substituting the solution $\phi^{(2)}$ in \eqref{eq:standard_error_rep} provides a computable estimate for
\begin{equation*}
\mathcal{E}_2 \approx \left( \psi^{(2)}(\cdot), e(\cdot,t_c) \right) = \left( L^* w(\cdot), e(\cdot,t_c) \right) = \left( L_2 w(\cdot), L_1 e(\cdot,t) \right).
\end{equation*}

\bigskip
\noindent \textbf{Third adjoint problem} To estimate $\left(\nabla_uf^\top(U,\cdot,t_c)w(\cdot),e(\cdot,t_c)\right)$, \eqref{eq:strong_adj} is solved backwards  with initial condition $\psi(x)=\psi^{(3)}(x)=\nabla_uf^\top(U,x,t_c)w(x)$. Substituting the solution $\phi^{(3)}$ in \eqref{eq:standard_error_rep} provides a computable estimate for
\begin{equation*} \label{eq:third_adj}
    \mathcal{E}_3 \approx \left(\psi^{(3)}(\cdot),e(\cdot,t_c)\right) = \left(\nabla_uf^\top(U,\cdot,t_c)w(\cdot),e(\cdot,t_c)\right).
\end{equation*}
The third adjoint problem is only required for nonlinear problems. If the right side in \eqref{eq:semilinear_evo_PDE} is $f=f(x,t)$, the gradient is $\nabla_u f=0$ and  $\mathcal{E}_3 \equiv 0$.

\begin{remark}
If the numerical solution $U$ is not sufficiently accurate, the $n$-th event for the approximate solution $U$ may be closest to the $m$-th event for the true solution $u$ where $n\neq m$. If this is the case, approximation by Taylor series around $t_c$ and $U(x,t_c)$ is inappropriate and the error estimator may not be accurate. See Remark 2 in \cite{ChaEstSteTav21}.
\end{remark}


\section{Model problems}
\label{sec:Model_Problems}

We apply the abstract theory developed in \S \ref{sec:Error_Estimate} to  the heat equation and the 1D linearized shallow water equations.

\subsection{Heat equation}
\label{sec:Heat_Equation}

The heat equation models the diffusion of heat or chemical species. We consider the heat equation in a domain $\Omega \subset \mathbb{R}^n$,
\begin{equation} \label{eq:heat}
\begin{aligned}
u_t(x, t) - \nabla^2 u(x, t) &= f(u,x, t), \qquad &&x \in \Omega, \qquad t\in (0,T],\\
u(x,t) &= 0, \qquad &&x \in \partial\Omega,  \ \quad {t \in  (0,T],}\\
u(x,0) &= u_0(x), \qquad &&x \in \Omega,
\end{aligned}
\end{equation}
where $u(x,t)$ is the temperature of the medium or concentration of a chemical species.

\bigskip
\noindent \underline{Weak form}

The weak form is: Find $u \in L^2(0,T;H_0^1(\Omega))$ such that
\begin{equation*} \label{eq:heat_weak}
\left( u_t, v \right) + \left( \nabla u, \nabla v \right)
  = \left( f, v \right) \quad \forall v \in H_0^1(\Omega), \qquad t \in (0,T].
\end{equation*}
When written in standard form as $u_t + Lu=f$, the operator $L$ is $-\nabla^2$. For any function $v \in H_0^1(\Omega)$,
\begin{equation*}
\left( Lu, v \right) = \left( -\nabla^2 u, v \right) = \left( \nabla u, \nabla v \right),
\end{equation*}
implying
\begin{equation*}
L_1 = \nabla = L_2.
\end{equation*}

\bigskip
\noindent \underline{Error estimate}

The error estimate \eqref{eq:error_estimate_computable} applied to the heat equation \eqref{eq:heat} becomes
\begin{equation*} \label{eq:heat_error}
t_t - t_c \approx
  \frac{\left(w(\cdot), e(\cdot, t_c)\right)}
       {\left( \nabla w(\cdot), \nabla U(\cdot, t_c) \right)  - \left( w(\cdot), f(U,\cdot, t_c) \right) - \left(\nabla_uf^\top(U,\cdot,t_c)w(\cdot),e(\cdot,t_c) \right)
      + \left( \nabla w(\cdot), \nabla e(\cdot, t_c) \right)}.
\end{equation*}
The three adjoint problems presented in \S \ref{sec:Error_Estimate_Computable} all have the same form,
\begin{equation*} \label{eq:heat_adj}
\left.
\begin{aligned}
-\phi^{(i)}_{t}(x, t) - \nabla^2 \phi^{(i)}(x, t) &= \nabla_u f^{\top}  \phi, &&x \in \Omega,  &&&t \in (t_c, 0]\\
\phi^{(i)}(x, t)   &= 0,  && x \in \partial \Omega, &&&t \in (t_c, 0]   \\
\phi^{(i)}(x, t_c) &= \psi^{(i)}(x), &&x \in \Omega
\end{aligned}
\qquad \right\}
\quad i=1,2,3,
\end{equation*}
but different initial conditions:
\begin{equation*} \label{eq:heat_adj_inits}
\begin{aligned}
    \psi^{(1)}(x) &= w(x), \\
    \psi^{(2)}(x) &= {-} \nabla^2 w{(x)}, \\
    \psi^{(3)}(x) &= \nabla_u f^{\top}(U,x,t_c) w(x).
\end{aligned}
\end{equation*}

\subsection{1D linearized shallow water equation (SWE)}
\label{sec:SWE}

The shallow water equations model wave propagation in a fluid domain in which the horizontal scale is large compared to the vertical scale. Applications of the SWE arise frequently in the study of the ocean and atmosphere. In particular, the linearized SWE model have been used to model tsunami wave propagation and inundation in coastal areas, flooding from a dam break, and flows and waves in the atmosphere, see e.g., \cite{Buhler98,CarYeh05}. \textit{A posteriori} error analysis for QoIs that are functionals of the solution to the linearized SWE has been conducted previously in \cite{DavLeV16}.

The nonlinear shallow water equations in one dimension on a domain $\Omega = [0,D]$  are (see \cite{DavLeV16})
\begin{equation} \label{eq:SWE}
\left.
\begin{aligned}
\eta_t(x, t) + \mu_x(x, t) &= f_1(x, t),  \\
\mu_t(x, t) + \left(\frac{\mu(x, t)^2}{\eta(x, t)} + \frac{1}{2}g\eta(x, t)^2\right)_x + g\eta B_x(x) &= f_2(x, t),
\end{aligned}
\qquad
\right \} \qquad x \in \Omega, \qquad t \in (0,T].
\end{equation}
where $\eta(x, t)$ corresponds to the water surface elevation and $\mu(x, t)$ corresponds to the momentum of the fluid. The constant $g$ is the gravitational acceleration constant, $f(x, t) = (f_1(x, t), f_2(x,t))^\top$ is the forcing (typically $0$) and $B(x)$ is the bathymetry (bottom surface profile).

Linearizing \eqref{eq:SWE} around a flat fluid surface with elevation $\bar{\eta}$ and momentum $\bar{\mu} = 0$,
\begin{align} \label{eq:SWE_lin}
\left.
\begin{aligned}
\eta_t(x, t) + \mu_x(x, t) &= f_1(x, t), \\
\mu_t(x, t) + g\bar{h}(x)\eta_x(x, t) &= f_2(x, t),\\
\end{aligned}
\qquad
\right \} \qquad &x \in \Omega, \hspace{2mm}t \in (0,T], \nonumber\\
\left.
\begin{aligned}
\mu(x,t) &= 0, \hspace{5.6mm} 
\end{aligned} 
\qquad
\right \} \hspace{7.75mm} &x \in \partial \Omega, t \in (0,T], \\
\left.
\begin{aligned}
\eta(x,0)&=\eta_0(x), \hspace{2.75mm}\\
\mu(x,0)&=\mu_0(x), \\
\end{aligned}
\qquad
\right \} \qquad &x \in \Omega,\nonumber
\end{align}
where $\bar{h}(x) = \bar{\eta} - B(x)$, $\eta_0(x)$ is the initial surface profile, {$\mu_0(x)$ is the initial momentum profile}, and { $\eta$ satisfies reflective boundary conditions} (see \cite{DavLeV16}).

\bigskip
\noindent \underline{Weak form}

The weak form of \eqref{eq:SWE_lin} is: Find $\eta \in L^2(0,T;H^1(\Omega))$ and $\mu \in L^2(0,T;H_0^1(\Omega))$
\begin{equation*} \label{eq:SWE_weak}
\left( \eta_t, v_1 \right) + \left( \mu_t, v_2 \right) + \left(\mu_x, v_1\right) + g \bar{h} \left( \eta_x, v_2 \right)
= \left( f_1, v_1 \right) + \left( f_2, v_2 \right), \quad
\forall {v_1 \in H^1(\Omega), } \, v_2 \in H^1_0(\Omega), \quad t \in (0,T).
\end{equation*}
When written in standard form as $u_t + Lu=f$, where {$u(x,t)=(\eta(x, t),\mu(x, t))^{\top}$} and the operator $L$ is
\begin{equation*} \label{eq:SWE_L}
L =
\begin{pmatrix}
  0& \frac{\partial}{\partial x}  \\
  g\bar{h}(x)\frac{\partial}{\partial x} & 0
\end{pmatrix},
\end{equation*}
and since $L$ contains only first order derivatives, the operators $L_1$ and $L_2$ are
\begin{equation*}
L_1 = L =
\begin{pmatrix}
0 & \frac{\partial}{\partial x}  \\
g\bar{h}(x)\frac{\partial}{\partial x} & 0
\end{pmatrix},
\qquad
L_2 =
\begin{pmatrix}
1 & 0 \\  0 & 1
\end{pmatrix}.
\end{equation*}

\bigskip
\noindent \underline{Error estimate}

For the linearized shallow water equations \eqref{eq:SWE_lin}, Corollary \ref{cor:lin_err} is applicable and the error estimate \eqref{eq:error_estimate_computable} becomes
\begin{equation*} \label{eq:SWE_error}
t_t - t_c \approx
  \frac{\left(w(\cdot), e(\cdot, t_c)\right)}
       {\left( w(\cdot), A(\cdot) U_x(\cdot, t_c)\right)  - \left( w(\cdot), f(\cdot, t_c) \right)
      + \left( w(\cdot), A(\cdot) e_x(\cdot, t_c)\right)},
\end{equation*}
where
\begin{equation*}
u =    \begin{pmatrix} \eta \\ \mu \end{pmatrix}, \quad
f =    \begin{pmatrix} f_1 \\ f_2 \end{pmatrix}, \quad
A(x) = \begin{pmatrix} 0 & 1 \\ g\bar{h}(x) & 0 \end{pmatrix}.
\end{equation*}
The two non-trivial adjoint problems take the form
\begin{equation*} \label{eq:SWE_adj}
\left.
\begin{aligned}
{-}\phi^{(i)}_{t}(x,t) {-(} A^{\top}(x)\phi^{(i)}{)}_{x}(x,t) &= 0,  &&x \in \Omega, &&&t \in (t_c,0], \\
\phi^{(i)}(x, t)   &= 0, \quad &&x \in \partial \Omega, &&&t \in (t_c, 0],  \\
\phi^{(i)}(x, t_c) &= \psi^{(i)}(x),  &&x\in\Omega
\end{aligned}
\qquad \right\}
\quad i=1,2,
\end{equation*}


\noindent where
\begin{equation*}
    \phi^{(i)} = \begin{pmatrix} \phi^{(i)}_1 \\ \phi^{(i)}_2 \end{pmatrix}
\end{equation*}
with initial conditions
\begin{equation*} \label{eq:SWE_adj_inits}
\begin{aligned}
    \psi^{(1)}(x) &= w(x), \\
    \psi^{(2)}(x) &= {-}(A^\top(x)w(x))_x.
\end{aligned}
\end{equation*}


\section{Numerical results}
\label{sec:Numerical_Results}

All numerical examples were computed using a uniform one-dimensional spatial mesh of length $D$ with $N_x$ finite elements and a uniform temporal mesh of length $T$ with $N_t$ elements. The forward problems were solved using a space-time Galerkin method as described \S \ref{sec:Galerkin_Method}. Unless otherwise noted, the same degree of approximation was used for the basis functions in both space and time. In all examples, the adjoint problems were solved using the same space-time mesh as the forward problem, but with a Galerkin finite element method \emph{two} degrees higher in both space and time. We choose to solve the adjoint problem with this precision in order to demonstrate the accuracy of the estimates. In practice, when estimates are used to drive an adaptive strategy, lower accuracy may be acceptable.


\subsection{1D linear heat equation}
\label{sec:Linear_Heat}

Consider the heat equation in one dimension,
\begin{equation} \label{eq:Linear_Heat}
\begin{aligned}
    u_t-u_{xx} &= \sin(\pi x)(\pi^2\cos(t) - \sin(t)), \quad &x \in [0, 1], \quad &t \in (0, 0.5], \\
    u(0, t)    &= u(1, t) = 0, & &t \in (0, 0.5], \\
    u(x, 0)    &= \sin(\pi x), &x \in [0, 1]. \quad &
\end{aligned}
\end{equation}
The right-hand-side forcing function has been chosen so that \eqref{eq:Linear_Heat} has the analytical solution $u(x,t) = \cos(t)\sin(\pi x)$. Let the functional $G(u; t)$ be
\begin{equation*} \label{eq:G_Linear_Heat}
    G(u; t) = \left(w(\cdot),u(\cdot,t) \right) =\left(\sin(\pi(\cdot)),u(\cdot,t) \right),
\end{equation*}
and the threshold value be $R=0.47$. The true values of the functional $G(u; t)$ are provided in Figure \ref{fig:Reference_QoI_Heat_Manufactured} and the threshold value $R$ appears as a dashed horizontal line. The QoI is the first occurrence of the event $G(u;t)=R$, that is, $t_t =  H(u,0)$.
\begin{figure}[ht]
\centering
\subfloat[\S \ref{sec:Linear_Heat} and \S \ref{sec:NonLinear_Heat}.]{
\includegraphics[width=8cm]{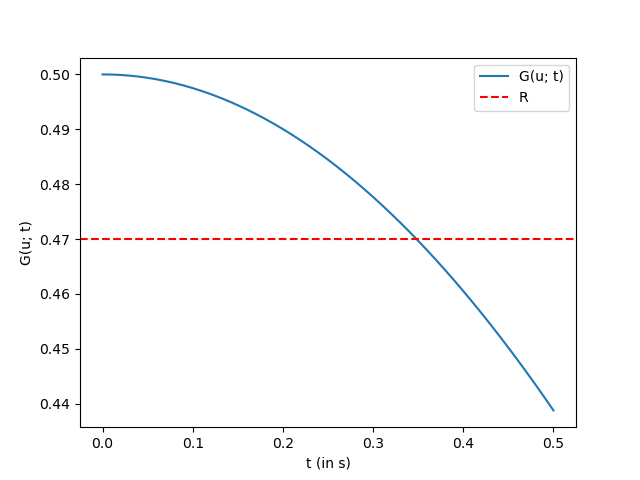}
\label{fig:Reference_QoI_Heat_Manufactured}}
\hfill
\subfloat[\S \ref{sec:SWE_Manufactured}.]{\includegraphics[width=8cm]{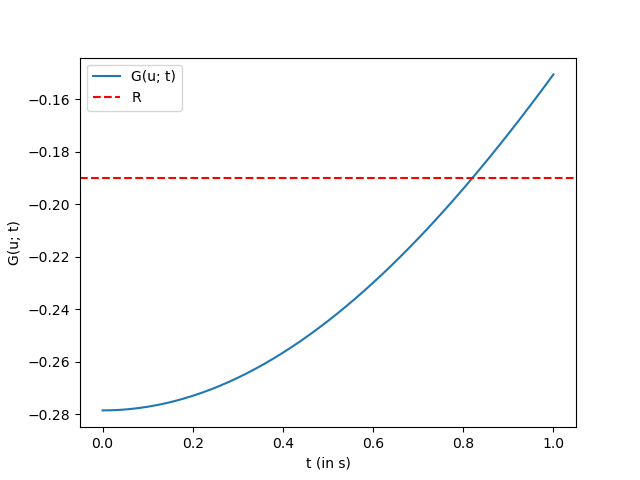}
\label{fig:Reference_QoI_SWE_Manufactured}}
\caption{Reference values of the functional $G(u; t)$ for \S \ref{sec:Linear_Heat}, \S \ref{sec:NonLinear_Heat}, and \S \ref{sec:SWE_Manufactured}.}
\end{figure}
The ``initial'' conditions for the adjoint problems in \S \ref{sec:Heat_Equation} are
\begin{equation*}
\begin{aligned}
\psi^{(1)}(x) &= w(x) = \sin(\pi x), \\
\psi^{(2)}(x) &= {-}w_{xx}{(x)} = \pi^2\sin(\pi x), \\
\psi^{(3)}(x) &= \nabla_u f^\top(U,x,t_c)w(x) = 0.
\end{aligned}
\end{equation*}
The problem \eqref{eq:Linear_Heat} was solved using the cG(1,1) method on a sequence of increasingly refined uniform meshes. The computed time $t_c$ to the first occurrence of the event $G(u;t)=R$, the error in the QoI $e_Q$, and the effectivity ratio $\rho_{\rm eff}$, of the \emph{a posteriori} error estimator are presented in Table \ref{tab:Linear_Heat}. The error decreases as the mesh is refined as expected, and the effectivity ratios are close to 1 in all cases.

\begin{table}[H]
    \centering
    \begin{tabular}{||c||c|c|c||}
    \hline
         $N$  & $t_c$ & $e_Q$ & $\rho_{\rm eff}$ \\
         \hline
         50  & 0.346346 & 1.820 $\times 10^{-3}$ & 1.003 \\
         100 & 0.347711 & 4.546 $\times 10^{-4}$ & 1.001 \\
         200 & 0.348053 & 1.129 $\times 10^{-4}$ & 1.000 \\
         400 & 0.348138 & 2.839 $\times 10^{-5}$ & 1.000 \\
         \hline
    \end{tabular}
    \caption{Computed time $t_c$ to the first occurrence of the event $G(u;t)=R$,  error in the QoI $e_Q$, and  effectivity ratio $\rho_{\rm eff}$, for \S \ref{sec:Linear_Heat}, using cG(1,1) to solve the forward problem and cG(3,3) to solve the adjoint problems. The exact time to the first occurrence of the event is $t_t = 0.34816603$.}
    \label{tab:Linear_Heat}
\end{table}


\subsection{1D nonlinear heat equation}
\label{sec:NonLinear_Heat}

Consider the nonlinear heat equation in one dimension

\begin{equation} \label{eq:NonLinear_Heat}
\begin{aligned}
    u_t - u_{xx} &= -u^2 + \sin(\pi x)(-\sin(t) + \pi^2\cos(t) + \cos^2(t)\sin(\pi x)), \quad &x \in (0, 1], \quad &t \in (0, 0.5], \\
    u(0, t)      &= u(1, t) = 0 & &t \in (0, 0.5], \\
    u(x, 0)      &= \sin(\pi x) &x \in [0, 1]. \quad &
\end{aligned}
\end{equation}
The right-hand-side ``forcing'' function has been chosen so that the nonlinear heat equation \eqref{eq:NonLinear_Heat} has the same analytical solution as the linear heat equation \eqref{eq:Linear_Heat} in \S \ref{sec:Linear_Heat}, namely $u(x,t) = \cos(t)\sin(\pi x)$. The functional $G(u; t)$ is again chosen to be
\begin{equation*} \label{eq:G_NonLinear_Hear}
    G(u; t) = \left(w(\cdot),u(\cdot,t) \right) =\left(\sin(\pi(\cdot)),u(\cdot,t) \right),
\end{equation*}
and the threshold value to be $R=0.47$. The QoI is the first occurrence of the event $G(u;t)=R$, that is, $t_t =  H(u,0)$. The true values of the functional are graphed in Figure \ref{fig:Reference_QoI_Heat_Manufactured}.

The ``initial'' conditions for the adjoint problems in \S \ref{sec:Heat_Equation} are
\begin{equation*}
\begin{aligned}
\psi^{(1)}(x) &= w(x) = \sin(\pi x), \\
\psi^{(2)}(x) &= {-}w_{xx}{(x)} = \pi^2\sin(\pi x), \\
\psi^{(3)}(x) &= \nabla_uf^\top(U,x,t_c)w(x) = -2\sin(\pi x)U(x,t_c).
\end{aligned}
\end{equation*}
Note in particular, that unlike for the previous, linear case in \S \ref{sec:Linear_Heat}, the third adjoint problem is non-trivial.
The problem \eqref{eq:NonLinear_Heat} was solved using the cG(1,1) methods using a sequence of increasingly refined uniform meshes. The computed time $t_c$ to the first occurrence of the event $G(u;t)=R$, the error in the QoI $e_Q$, and the effectivity ratio $\rho_{\rm eff}$,  of the \emph{a posteriori} error estimator are presented in Table \ref{tab:NonLinear_Heat} which clearly indicates that the error decreases as the mesh is refined and the effectivity ratios remains close to 1 in all cases.

\begin{table}[H]
    \centering
    \begin{tabular}{||c||c|c|c||}
    \hline
         $N$ & $t_c$ & $e_Q$ & $\rho_{\rm eff}$ \\
         \hline
         50  & 0.346531 & 1.635 $\times 10^{-3}$ & 1.002 \\
         100 & 0.347757 & 4.087 $\times 10^{-4}$ & 1.001 \\
         200 & 0.348065 & 1.015 $\times 10^{-4}$ & 1.000 \\
         400 & 0.348140 & 2.553 $\times 10^{-5}$ & 1.000 \\
         \hline
    \end{tabular}
    \caption{Computed time $t_c$ to the first occurrence of the event $G(u;t)=R$, error in the QoI $e_Q$, and  effectivity ratio $\rho_{\rm eff}$, for \S \ref{sec:NonLinear_Heat}, using cG(1,1) to solve the forward problem and cG(3,3) to solve the adjoint problems. The exact time to the first occurrence of the event is $t_t = 0.34816603$.}
    \label{tab:NonLinear_Heat}
\end{table}


\subsection{1D linearized SWE: Manufactured solution}
\label{sec:SWE_Manufactured}

Consider the linearized SWEs \eqref{eq:SWE_lin} over the space-time domain $ (x,t)\in [0, 10]\times[0, 1]$, with right hand side
\begin{equation}
\label{eq:SWE_Manufactured_forcing}
f = \begin{pmatrix} f_1 \\ f_2 \end{pmatrix} =
    \begin{pmatrix}
        -\sin(t)\sin(\pi x) + \pi\cos(t)\cos(\pi x) \\
        -\sin(t)\sin(\pi x) + \pi g \bar{h}(x)\cos(t)\cos(\pi x)
    \end{pmatrix}.
\end{equation}
In order to construct an analytical exact solution, the bathymetry was chosen to be a constant, $B(x)= -10$ as shown in Figure \ref{fig:SWE_Manufactured_Bathymetry} and Dirichlet boundary conditions were imposed at the boundaries of the domain, specifically $u(0, t) = u(10, t) = (2,0)^{\top}$. The initial surface elevation and momentum were chosen as $u(x, 0) = (\eta_0(x), \mu_0(x))^{\top} = ( 2 + \sin(\pi x) , \sin(\pi x) )^{\top} $. The initial surface elevation $\eta_0(x)$ is shown in Figure \ref{fig:SWE_Manufactured_InitialWaveHeight}. The rest-state elevation was $\bar{\eta} = 2$.

\begin{figure}[ht]
\centering
\subfloat[$B(x)$]{\includegraphics[width=5cm]{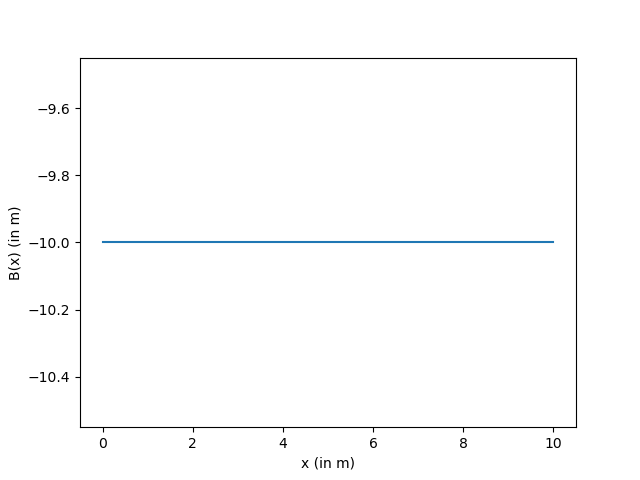}
\label{fig:SWE_Manufactured_Bathymetry}}
\subfloat[$\eta_0(x)$]{\includegraphics[width=5cm]{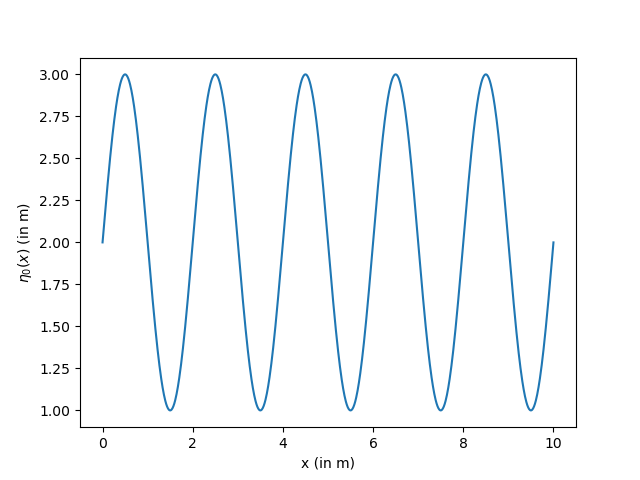}
\label{fig:SWE_Manufactured_InitialWaveHeight}}
\subfloat[$w_2(x)$]{\includegraphics[width=5cm]{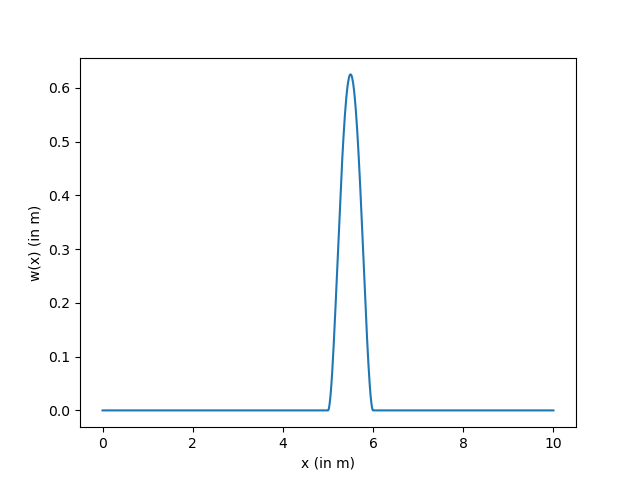}
\label{fig:SWE_Manufactured_WeightFunction}}
\caption{The bathymetry $B(x)$, initial wave height $\eta_0(x)$, and nonzero component of the weight function $w_2(x)$ for \S \ref{sec:SWE_Manufactured}.}
\end{figure}

Finally, the forcing term \eqref{eq:SWE_Manufactured_forcing} was chosen so that \eqref{eq:SWE_lin} has the analytical solution
\begin{equation*}
u(x, t) =
    \begin{pmatrix} \eta(x, t) \\ \mu(x, t) \end{pmatrix} =
    \begin{pmatrix} 2 + \cos(t)\sin(\pi x) \\ \cos(t)\sin(\pi x) \end{pmatrix}.
\end{equation*}
In order to define the functional $G(u;t)$, let the weight function be $w(x) = (0,w_2(x))$  where
\begin{equation*}
w_2(x) =
\qquad \left \{ \qquad
\begin{aligned}
0  \qquad\qquad\qquad          & x \leq 5, \\
10(x - 5)^2(x - 6)^2 \quad 5 < & x < 6, \\
0  \qquad\qquad\qquad          & x \geq 6.
\end{aligned}
\qquad \right \}
\end{equation*}
The non-zero component of the weight function $w_2(x)$ is depicted in Figure \ref{fig:SWE_Manufactured_WeightFunction}.
The true values of the functional $G(u; t)= \left(w(\cdot),u(\cdot,t)\right)$ are provided in Figure \ref{fig:Reference_QoI_SWE_Manufactured}, where the threshold value $R=-0.19$ appears as a dashed horizontal line. The QoI is the first occurrence of the event $G(u;t)=R$, that is, $t_t =  H(u,0)$.

The initial conditions  for the adjoint problems  in \S \ref{sec:SWE} are
\begin{equation*}
\begin{aligned}
    \psi^{(1)}(x) &= (0,w_2(x))^{\top}, \\
    \psi^{(2)}(x) &= ({-}g(\bar{h}(x)w_2(x))_x,0)^{\top}.
\end{aligned}
\end{equation*}
The linear SWE were solved using the cG(2,2) method (\S \ref{sec:Galerkin_Method}) on a sequence of increasingly refined uniform meshes. The computed time $t_c$ to the first occurrence of the event $G(u;t)=R$, the error in the QoI $e_Q$, and the effectivity ratio $\rho_{\rm eff}$, of the \emph{a posteriori} error estimator are presented in Table \ref{tab:SWE_Manufactured}.
\begin{table}[H]
    \centering
    \begin{tabular}{||c||c|c|c|c||}
    \hline
         $N$ & $t_c$ & $e_Q$ & $\rho_{\rm eff}$ \\
         \hline
         50  & 0.820996 & -1.120 $\times 10^{-3}$ & 0.997 \\
         100 & 0.819917 & -4.035 $\times 10^{-5}$ & 1.000 \\
         200 & 0.819883 & -6.940 $\times 10^{-6}$ & 1.000 \\
         400 & 0.819878 & -1.851 $\times 10^{-6}$ & 1.000 \\
         \hline
    \end{tabular}
    \caption{Computed time $t_c$ to the first occurrence of the event $G(u;t)=R$, error in the QoI $e_Q$, and  effectivity ratio $\rho_{\rm eff}$, for \S \ref{sec:SWE_Manufactured}, using cG(2,2) to solve the forward problem and cG(4,4) to solve the adjoint problems. The exact time to the first occurrence of the event is $t_t = 0.81987644$. }
    \label{tab:SWE_Manufactured}
\end{table}


\subsection{1D linearized SWE: Constant bathymetry}
\label{sec:SWE_Constant}

Consider the linearized SWEs \eqref{eq:SWE_lin} over the space-time domain $(x,t) \in [0, 400] \times [0, 200]$, with the more realistic right hand side $f =(0,0)^{\top} $. The bathymetry was chosen to be the constant $B(x)= -0.1$ as depicted in Figure \ref{fig:SWE_Constant_Bathymetry} and the rest-state height was $\bar{\eta} = 1$. Dirichlet boundary conditions were imposed on the momentum only, specifically $ \mu(0, t) = \mu(D, t) = 0$. The initial conditions were $u(x,0)=(\eta_0(x),0)^{\top}$, where
\begin{equation*}\label{eq:initcondswe}
\eta_0(x) =
\qquad \left \{ \qquad
\begin{aligned}
        0,  \qquad\qquad\qquad\qquad                            & x \leq 100, \\
        \frac{0.4}{390625}(x - 100)^2(x - 150)^2,  \quad  100 < & x < 150, \\
        0,  \qquad\qquad\qquad\qquad                            & x \geq 150.
    \end{aligned}
\qquad \right \}
\end{equation*}
The initial surface elevation $\eta_0(x)$ is shown in Figure \ref{fig:SWE_Constant_InitialWaveHeight}.

In order to define the functional $G(u;t)$ we set the weight function $w=(w_1(x),0)^{\top}$ where
\begin{equation*}
w_1(x) =
\qquad \left \{ \qquad
\begin{aligned}
        0,   \qquad\qquad\qquad\qquad                        & x \leq 160, \\
        \frac{1}{200000}(x - 160)^2(x - 200)^2, \quad  160 < & x < 200, \\
        0,   \qquad\qquad\qquad\qquad                        & x \geq 200.
    \end{aligned}
\qquad \right \}
\end{equation*}
The non-zero component of the weight function $w_1(x)$ is shown in Figure \ref{fig:SWE_Constant_WeightFunction}.
\begin{figure}[ht]
\centering
\subfloat[$B(x)$]{
\includegraphics[width=5.25cm]{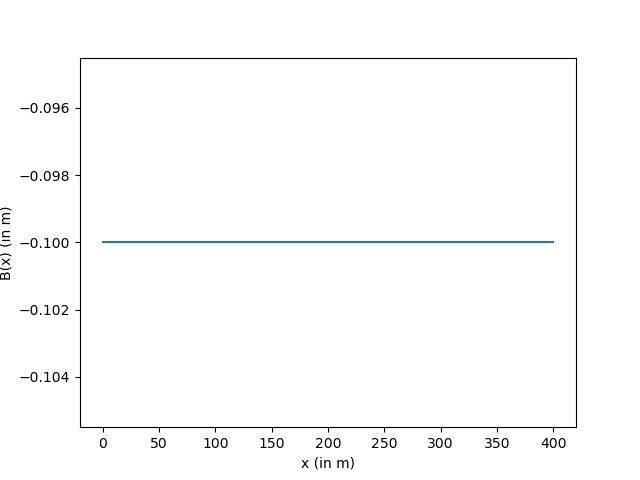}
\label{fig:SWE_Constant_Bathymetry}}
\hfill
\subfloat[$\eta_0(x)$]{
\includegraphics[width=5.25cm]{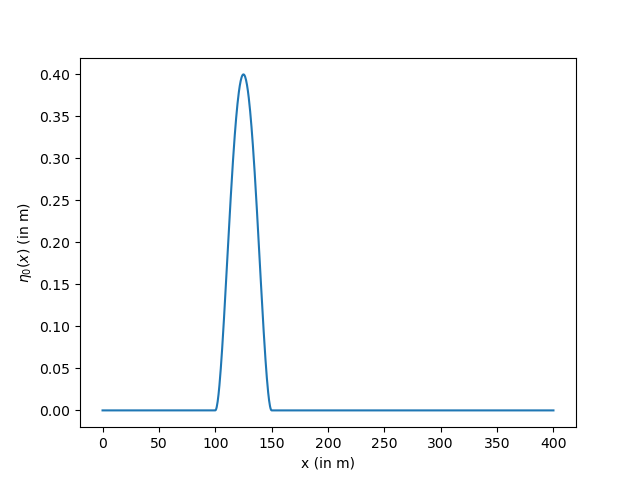}
\label{fig:SWE_Constant_InitialWaveHeight}}
\hfill
\subfloat[$w_1(x)$]{
\includegraphics[width=5.25cm]{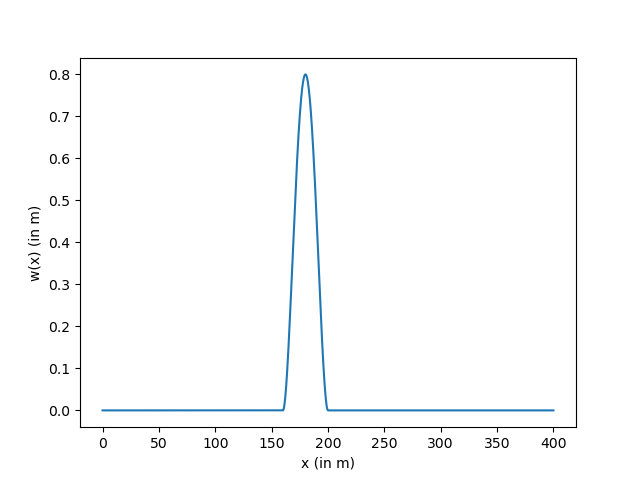}
\label{fig:SWE_Constant_WeightFunction}}
\caption{The bathymetry $B(x)$, initial wave height $\eta_0(x)$, and nonzero component of the weight function $w_1(x)$ for \S \ref{sec:SWE_Constant}.}
\end{figure}
The linear SWE were solved using the cG(2,2) method (\S \ref{sec:Galerkin_Method}), and the surface elevation $\eta(x,t)$ at several time is shown in Figure \ref{fig:wave_height_behavior}.
\begin{figure}[ht]
\centering
\subfloat[$t = 0$]{\includegraphics[width=5.25cm]{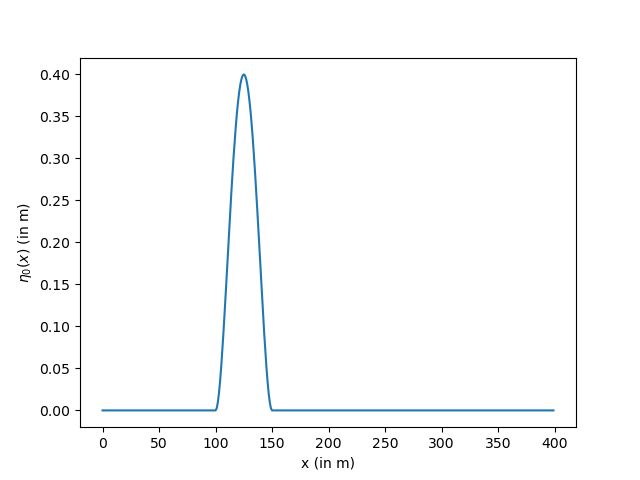} \label{fig:WaveHeight1} }
\subfloat[$t = 20$s]{\includegraphics[width=5.25cm]{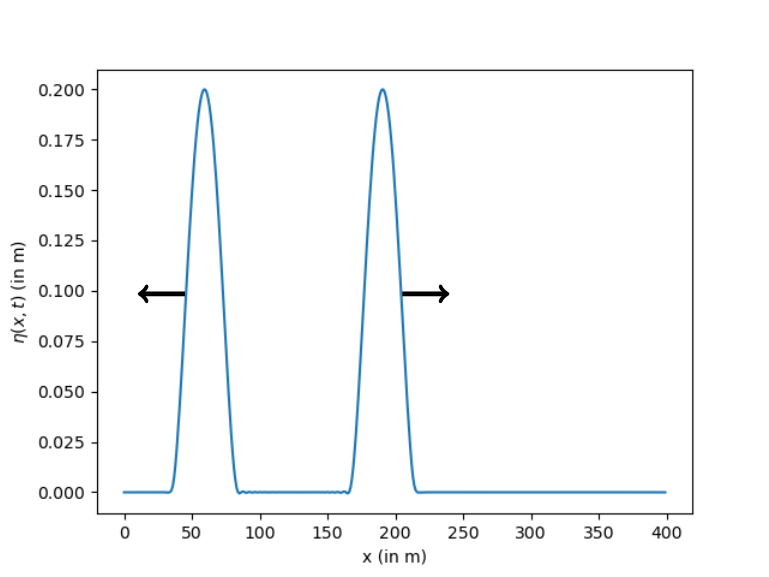} \label{fig:WaveHeight2}}
\subfloat[$t = 40$s ]{\includegraphics[width=5.25cm]{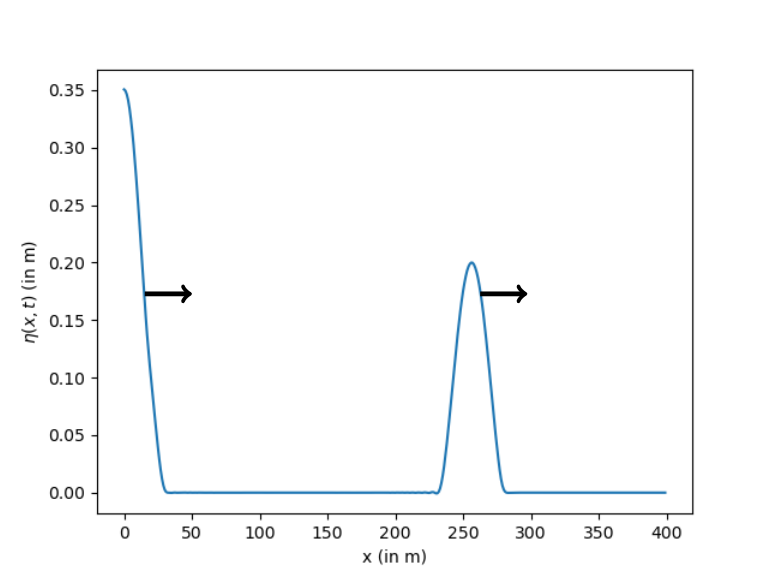} \label{fig:WaveHeight3}}
\newline
\subfloat[$t = 65$s]{\includegraphics[width=5.25cm]{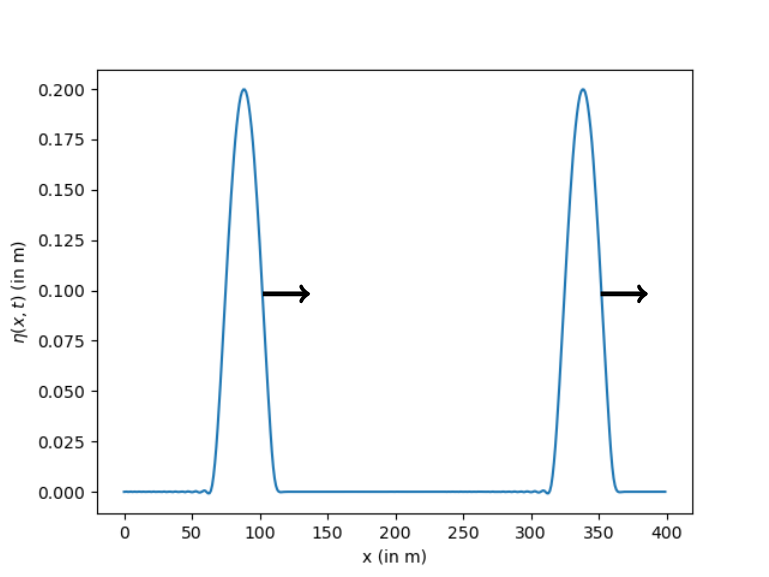} \label{fig:WaveHeight4}}
\subfloat[$t = 90$s]{\includegraphics[width=5.25cm]{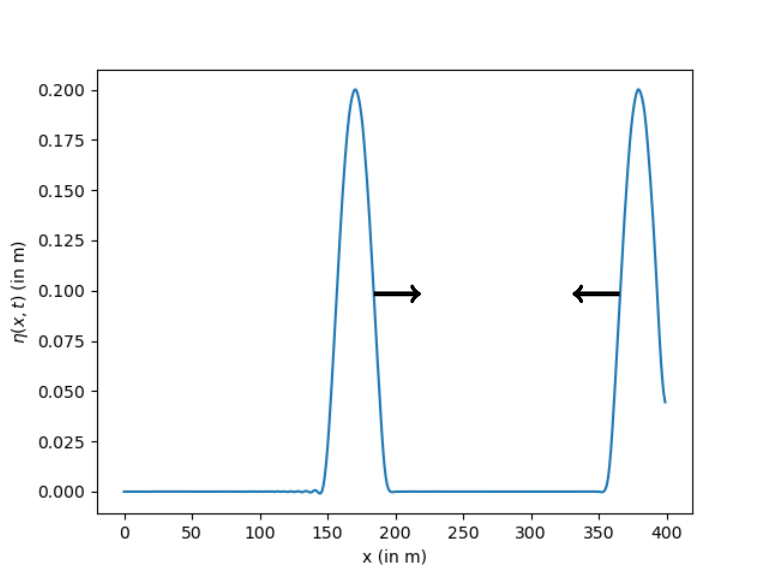} \label{fig:WaveHeight5}}
\subfloat[$t = 107.5$s]{\includegraphics[width=5.25cm]{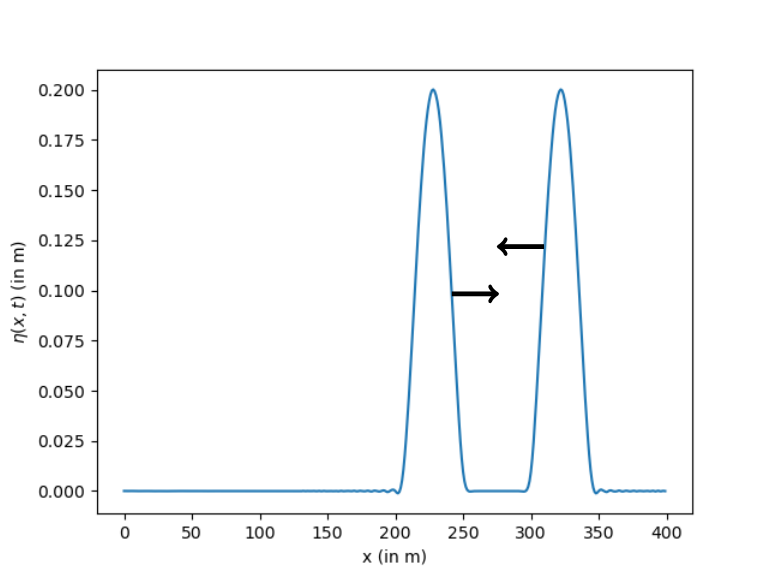} \label{fig:WaveHeight6}}
\newline
\subfloat[$t = 125$s]{\includegraphics[width=5.25cm]{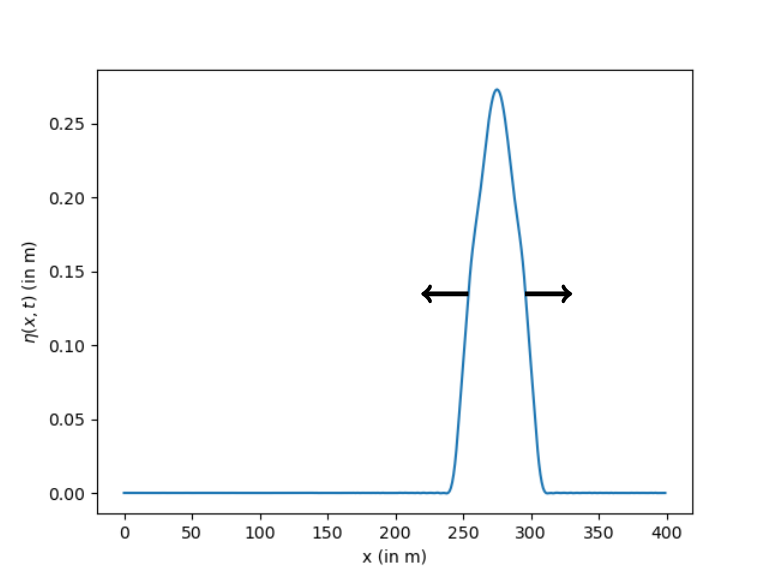} \label{fig:WaveHeight7}}
\subfloat[$t = 165$s]{\includegraphics[width=5.25cm]{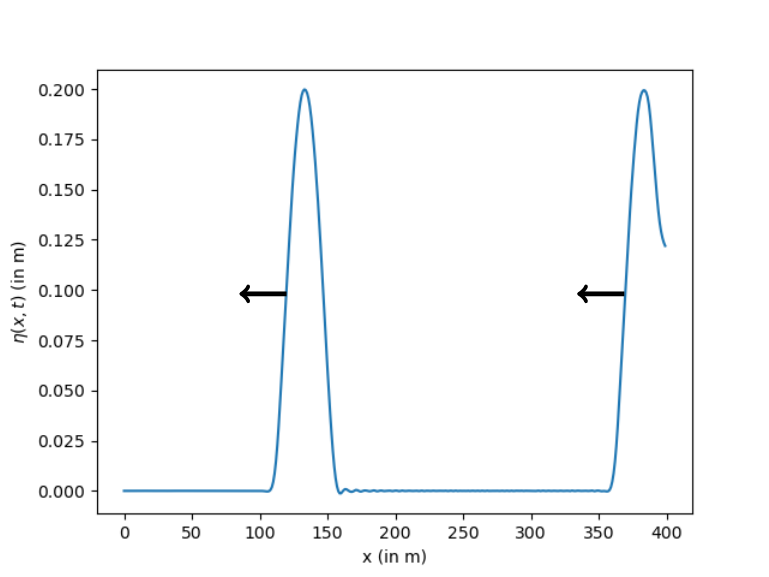} \label{fig:WaveHeight8}}
\subfloat[$t = 200$s]{\includegraphics[width=5.25cm]{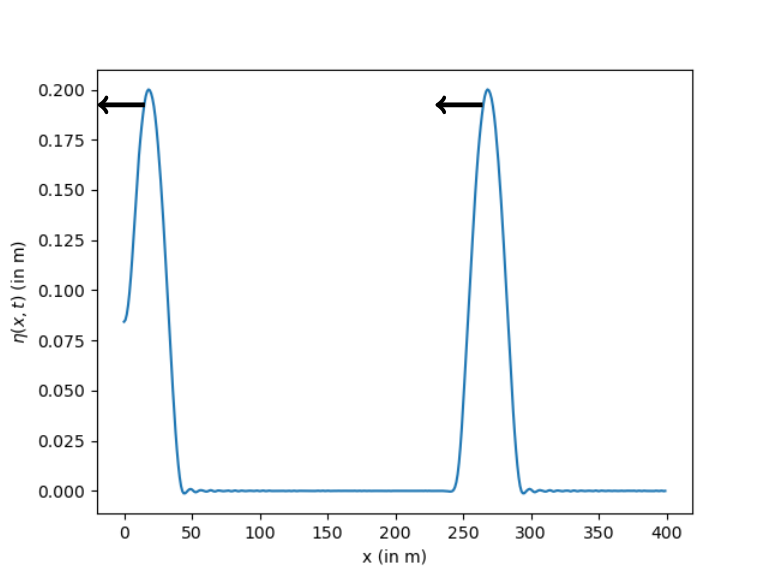} \label{fig:WaveHeight9}}
\caption{Evolution of the surface elevation $\eta(x,t)$ for \S \ref{sec:SWE_Constant}. The initial profile generates left and right traveling waves which eventually reflect off the left-hand and right-hand boundaries. The arrows indicate the direction of propagation.}
\label{fig:wave_height_behavior}
\end{figure}
Since an analytical solution $u$ is unavailable, a reference solution $u$ was computed using a mesh-size of $N_{\rm ref}=800$ with the cG(3,3) method. Reference values of the functional $G(u; t)$ are graphed in Figure \ref{fig:Reference_QoI_SWE_Constant}, where the threshold value of $R=2$ is indicated by the dashed horizontal line

\begin{figure}[ht]
\centering
\subfloat[\S \ref{sec:SWE_Constant}.]{
\includegraphics[width=5.25cm]{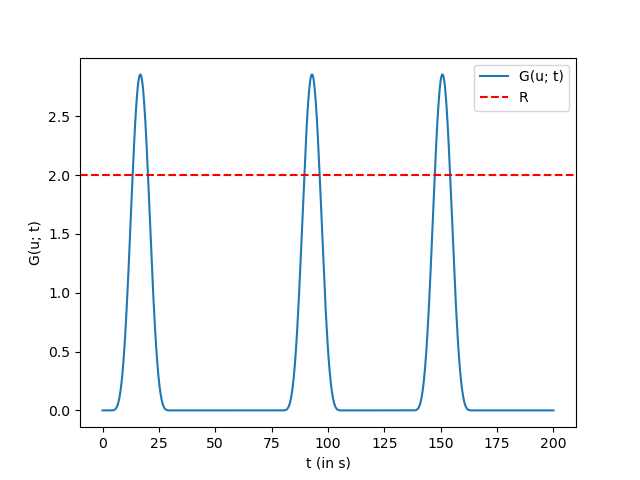}
\label{fig:Reference_QoI_SWE_Constant}}
\hfill
\subfloat[\S \ref{sec:SWE_Shelf}]{
\includegraphics[width=5.25cm]{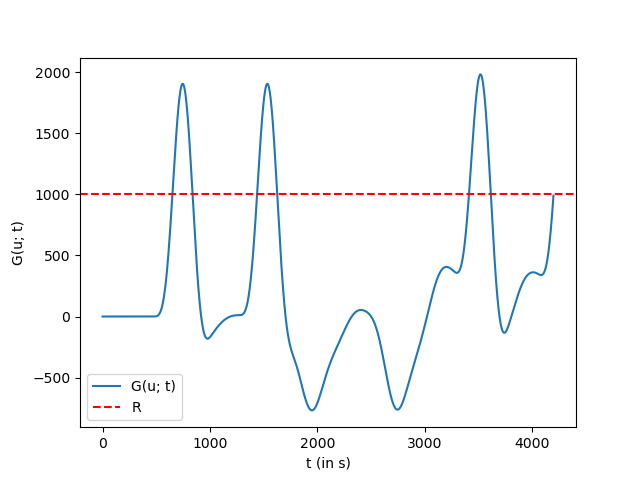}
\label{fig:Reference_QoI_SWE_Shelf}}
\hfill
\subfloat[\S \ref{sec:SWE_Reef}.]{
\includegraphics[width=5.25cm]{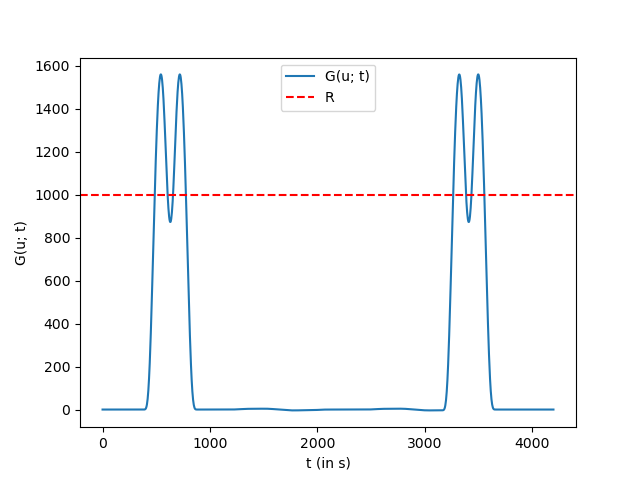}
\label{fig:Reference_QoI_SWE_Reef}}
\caption{Reference values of the $G(u; t)$ for \S \ref{sec:SWE_Constant}, \S \ref{sec:SWE_Shelf} and \S \ref{sec:SWE_Reef}.}
\end{figure}


The QoI defined by the functional $G(u;t)$ is essentially the time at which the surface elevation at a narrow region centered at $x=180$ achieves a specified value. Recall that by an event we mean $G(\cdot;t)$ achieving the threshold value $R$. The first event occurs when the incident wave arrives near $x=180$ and $G(u;t)$ increases above the threshold value. The second event occurs when the incident wave travels beyond the narrow region centered at $x=180$ and $G(u;t)$ drops below the threshold value. The third and fourth events occur when the reflected wave passes through the narrow region centered at $x=180$ and $G(u;t)$ first increases above and then drops below the threshold value.

The initial conditions  for the adjoint problems  in \S \ref{sec:SWE} are
\begin{equation*}
\begin{aligned}
    \psi^{(1)}(x) &= (w_1(x),0)^{\top},     \\
    \psi^{(2)}(x) &= (0,{-}w_{1,x}(x))^{\top}.
\end{aligned}
\end{equation*}
The computed times $t_c$ to the first, second and third occurrences of the event $G(u;t)=R$, errors in the QoI $e_Q$, and effectivity ratios $\rho_{\rm eff}$, are presented in Table \ref{tab:SWE_Constant}, where the effectivity ratios are all seen to be close to 1 even for the later events.
\begin{table}[H]
    \centering
    \begin{tabular}{||c||c|c|c||c|c|c||c|c|c||}
    \hline
         & \multicolumn{3}{c||}{First Event} & \multicolumn{3}{c||}{Second Event} & \multicolumn{3}{c||}{Third Event} \\
         \hline
         \hline
         $N$ & $t_c$ & $e_Q$ & $\rho_{\rm eff}$ & $t_c$ & $e_Q$ & $\rho_{\rm eff}$ & $t_c$ & $e_Q$ & $\rho_{\rm eff}$ \\
         \hline
         50  & 13.4083 & -5.849 $\times 10^{-2}$ & 1.001 & 20.2736 & -1.204 $\times 10^{-1}$ & 1.008 & 90.0299 & -5.370 $\times 10^{-1}$ & 0.991 \\
         100 & 13.3634 & -1.363 $\times 10^{-2}$ & 1.000 & 20.1599 & -6.778 $\times 10^{-3}$ & 1.003 & 89.5458 & -5.291 $\times 10^{-2}$ & 1.000 \\
         200 & 13.3495 &  2.779 $\times 10^{-4}$ & 1.000 & 20.1529 &  2.538 $\times 10^{-4}$ & 0.990 & 89.4961 & -3.164 $\times 10^{-3}$ & 0.988 \\
         400 & 13.3499 & -1.459 $\times 10^{-4}$ & 1.000 & 20.1531 &  6.998 $\times 10^{-5}$ & 1.002 & 89.4931 & -2.166 $\times 10^{-4}$ & 0.967 \\
         \hline
    \end{tabular}
    \caption{Computed times $t_c$ to the first, second and third occurrences of the event $G(u;t)=R$, error in the QoI $e_Q$, and effectivity ratio $\rho_{\rm eff}$, for \S \ref{sec:SWE_Constant}, using cG(2,2) to solve the forward problem and cG(4,4) to solve the adjoint problems. The first, second, and third events occur at $t_t = 13.349799$, $t_t = 20.153171$, and $t_t = 89.492912$ respectively.}
    \label{tab:SWE_Constant}
\end{table}


\subsection{1D linearized SWE: Continental shelf}
\label{sec:SWE_Shelf}

Consider the linearized SWEs \eqref{eq:SWE_lin} over the space-time domain $(x,t) \in [0, 400000] \times [0, 4200]$. In this example, the bathymetry $B(x)$ was chosen to be
\begin{equation*} \label{eq:B_shelf}
B(x) =
\qquad \left \{ \qquad
\begin{aligned}
 -200,    \qquad\qquad\qquad  & x \leq 25000, \\
 -0.152x+3600, \quad  25000 < & x < 50000, \\
 -4000,   \qquad\qquad\qquad  & x \geq 50000,
\end{aligned}
\qquad \right \}
\end{equation*}
as depicted in Figure \ref{fig:SWE_Shelf_Bathymetry} and represents a continental shelf. The rest-state height $\bar{\eta} = 1$. Dirichlet boundary conditions were applied to the momentum only, specifically $ \mu(0, t) = \mu(D, t) = 0$. The initial condition was $u(x,0)=(\eta_0(x),0)^{\top}$, where
\begin{equation*} \label{eq:swelevequeinitcondb}
\eta_0(x) =
\qquad \left \{ \qquad
\begin{aligned}
         0,      \qquad\qquad\qquad\qquad\qquad                            & x \leq 100000, \\       \frac{0.4}{25000^4}(x - 100000)^2(x - 150000)^2,  \quad  100000 < & x < 150000, \\
         0,      \qquad\qquad\qquad\qquad\qquad                            & x \geq 150000.
\end{aligned}
\qquad \right \}
\end{equation*}
The initial surface elevation $\eta_0(x)$ is depicted in Figure \ref{fig:SWE_Shelf_InitialWaveHeight}.

In order to define the functional $G(u;t)$ \eqref{eq:G}, the weight function $w=(w_1(x),0)^{\top}$ was
\begin{equation*}
w_1(x) =
\qquad \left \{ \qquad
\begin{aligned}
        0,    \qquad\qquad\qquad\qquad\qquad                       & x \leq 10000, \\
        \frac{1}{7500^4}(x - 10000)^2(x - 25000)^2, \quad  10000 < & x < 25000, \\
        0,    \qquad\qquad\qquad\qquad\qquad                       & x \geq 25000.
\end{aligned}
\qquad \right \}
\end{equation*}
The non-zero component of the weight function $w_1(x)$ is depicted in Figure \ref{fig:SWE_Shelf_WeightFunction}.

\begin{figure}[ht]
\centering
\subfloat[$B(x)$]{
\includegraphics[width=5.25cm]{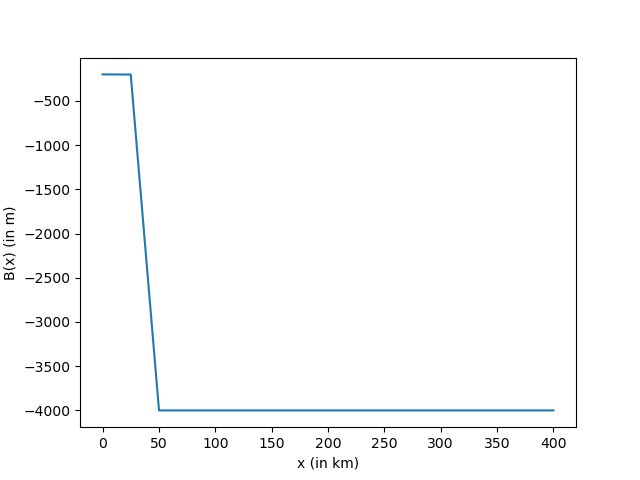}
\label{fig:SWE_Shelf_Bathymetry}}
\hfill
\subfloat[$\eta_0(x)$]{
\includegraphics[width=5.25cm]{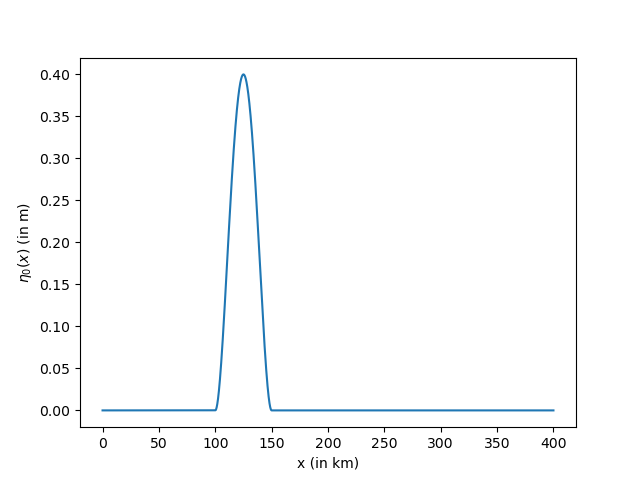}
\label{fig:SWE_Shelf_InitialWaveHeight}}
\hfill
\subfloat[$w_1(x)$]{
\includegraphics[width=5.25cm]{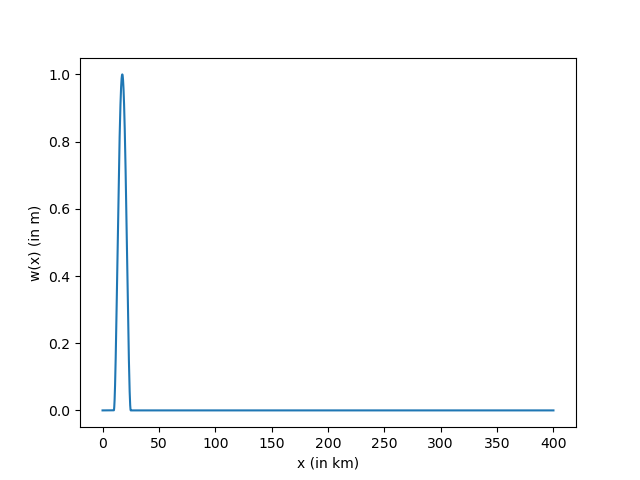}
\label{fig:SWE_Shelf_WeightFunction}}
\caption{The bathymetry $B(x)$, initial wave height $\eta_0(x)$, and nonzero component of the weight function $w_1(x)$ for \S \ref{sec:SWE_Shelf}.}
\end{figure}

A reference solution $u$ was computed using a mesh with $N_{\rm ref}=1280$ sub-intervals with the cG(3,3) method. The corresponding values of the functional $G(u; t)$ are plotted in Figure \ref{fig:Reference_QoI_SWE_Shelf} and the critical value of $R=1000$ is presented as a dashed horizontal line. The QoIs we consider are the times for the first, second and third occurrences of the event $G(u;t)=R$.

The initial conditions  for the adjoint problems  in \S \ref{sec:SWE} are
\begin{equation*}
\begin{aligned}
    \psi^{(1)}(x) &= (w_1(x),0)^{\top}, \\
    \psi^{(2)}(x) &= (0,{-}w_{1,x}(x))^{\top}.
\end{aligned}
\end{equation*}
The forward problem was solved using the cG(2,2) method (\S \ref{sec:Galerkin_Method}) on a sequence of increasingly refined uniform meshes. The computed times $t_c$ to the first, second and third occurrences of the event $G(u;t)=R$, errors in the QoI $e_Q$, and effectivity ratios $\rho_{\rm eff}$, are presented in Table  \ref{tab:SWE_Self_22}.
\begin{table}[H]
    \centering
\begin{tabular}{||c||c|c|c||c|c|c||c|c|c||}
    \hline
         & \multicolumn{3}{c||}{First Event} & \multicolumn{3}{c||}{Second Event} & \multicolumn{3}{c||}{Third Event} \\
         \hline
         \hline
         $N$ & $t_c$ & $e_Q$ & $\rho_{\rm eff}$ & $t_c$ & $e_Q$ & $\rho_{\rm eff}$ & $t_c$ & $e_Q$ & $\rho_{\rm eff}$ \\
         \hline
         80  & 651.267 & -3.647 $\times10^{-1}$ & 1.002 & 838.675 & -2.456 $\times10^{-1}$ & 0.970 & 1438.541 & 9.255 $\times10^{-1}$ & 0.925 \\
         160 & 650.975 & -7.329 $\times10^{-2}$ & 1.000 & 838.421 & 8.597 $\times10^{-3}$ & 0.929 & 1439.384 & 1.008 $\times10^{-1}$ & 0.979 \\
         320 & 650.907 & -4.970 $\times10^{-3}$ & 1.000 & 838.428 & 8.944 $\times10^{-4}$ & 1.006 & 1439.479 & 6.161 $\times10^{-3}$ & 0.986 \\
         640 & 650.902 & 3.672  $\times10^{-4}$ & 1.000 & 838.429 & -2.890 $\times10^{-4}$ & 1.012 & 1439.484 & 1.046 $\times10^{-3}$ & 1.042\\
         \hline
    \end{tabular}
    \caption{Computed times $t_c$ to the first, second and third occurrences of the event $G(u;t)=R$, error in the QoI $e_Q$, and effectivity ratio $\rho_{\rm eff}$, for \S \ref{sec:SWE_Shelf}, using cG(2,2) to solve the forward problem and cG(4,4) to solve the adjoint problems. The first, second, and third events occur at  $t_t = 650.79647$, $t_t = 838.42910$, and $t_t = 1439.4849$ respectively.}
    \label{tab:SWE_Self_22}
\end{table}

The effect of the spatial discretization was investigated by repeating the above calculations using cG(2,1). Note that the degree of the spatial basis functions used to solve the adjoint problems was also reduced by one. As anticipated, as is shown in Table \ref{tab:SWE_Self_21}, the errors $e_Q$ for the times to the first three events $G(u;t)=R$ are much larger than when solving using cG(2,2). However, the effectivity ratios remain close to 1 as desired.
\begin{table}[H]
    \centering
    \begin{tabular}{||c||c|c|c||c|c|c||c|c|c||}
    \hline
         & \multicolumn{3}{c||}{First Event} & \multicolumn{3}{c||}{Second Event} & \multicolumn{3}{c||}{Third event} \\
         \hline
         \hline
         $N$ & $t_c$ & $e_Q$ & $\rho_{\rm eff}$ & $t_c$ & $e_Q$ & $\rho_{\rm eff}$ & $t_c$ & $e_Q$ & $\rho_{\rm eff}$ \\
         \hline
         80  & 643.874 & 6.922  $\times 10^{0}$  & 1.023 & 846.194 & -7.681 $\times 10^{0}$  & 0.999 & 1465.863 & -2.648 $\times 10^{1}$  & 1.050 \\
         160 & 650.852 & -5.536 $\times 10^{-2}$ & 1.014 & 840.674 & -2.161 $\times 10^{0}$  & 0.999 & 1442.571 & -3.192 $\times 10^{0}$  & 0.994 \\
         320 & 650.802 & -5.360 $\times 10^{-3}$ & 1.002 & 838.683 & -1.702 $\times 10^{-1}$ & 0.998 & 1439.675 & -2.964 $\times 10^{-1}$ & 1.003 \\
         640 & 650.795 & 1.889  $\times 10^{-3}$ & 0.997 & 838.521 & -8.420 $\times 10^{-3}$ & 0.995 & 1439.392 & -1.001 $\times 10^{-2}$ & 1.002 \\
         \hline
    \end{tabular}
    \caption{Computed times $t_c$ to the first, second and third occurrences of the event $G(u;t)=R$, error in the QoI $e_Q$, and effectivity ratio $\rho_{\rm eff}$, for \S \ref{sec:SWE_Shelf}, using cG(2,1) to solve the forward problem and cG(4,3) to solve the adjoint problems. The first, second, and third events occur at $t_t = 650.79647$, $t_t = 838.51300$, and $t_t = 1439.3789$ respectively.}
    \label{tab:SWE_Self_21}
\end{table}


\subsection{1D linearized SWE: Reef}
\label{sec:SWE_Reef}

Consider the homogeneous, linearized SWEs \eqref{eq:SWE_lin} over the space-time domain $(x,t)\in  [0, 400000] \times [0, 4200]$. The bathymetry $B(x)$ was a simplified representation of an obstruction on the sea floor, e.g., a reef, and is given as
\begin{equation*} \label{eq:B_reef}
B(x) =
\qquad \left \{ \qquad
\begin{aligned}
 -4000,  \qquad\qquad\qquad\qquad                        & x \leq 25000, \\
 \frac{-50}{25000^2}(x-200000)(x-250000,) \quad 200000 < & x < 250000, \\
 -4000, \qquad\qquad\qquad\qquad                         & x \geq 50000,
\end{aligned}
\qquad \right \}
\end{equation*}
which is depicted in Figure \ref{fig:SWE_Reef_Bathymetry}. The rest-state $\bar{\eta} = 1$. The Dirichlet boundary conditions were again $ \mu(0, t) = \mu(D, t) = 0$, and the initial condition was $u(x,0)=(\eta_0(x),0)^{\top}$, where
\begin{equation*}
\eta_0(x) =
\qquad \left \{ \qquad
\begin{aligned}
0,    \qquad\qquad\qquad\qquad\qquad                            & x \leq 100000, \\
\frac{0.4}{25000^4}(x - 100000)^2(x - 150000)^2, \quad 100000 < & x < 150000, \\
0,    \qquad\qquad\qquad\qquad\qquad                            & x \geq 150000.
\end{aligned}
\qquad \right\}
\end{equation*}
The initial wave height $\eta_0(x)$ is presented in Figure \ref{fig:SWE_Reef_InitialWaveHeight}.

In order to define the functional $G(u;t)$, the weight function $w=(w_1(x),0)^{\top}$ was given by
\begin{equation*}
w_1(x) =
\qquad \left \{ \qquad
\begin{aligned}
0,    \qquad\qquad\qquad\qquad\qquad                         & x \leq 10000, \\
\frac{1}{7500^4}(x - 10000)^2(x - 25000)^2, \quad    10000 < & x < 15000, \\
0,    \qquad\qquad\qquad\qquad\qquad                         & x \geq 25000.
\end{aligned}
\qquad \right\}
\end{equation*}
and the non-zero component of the weight function $w_1(x)$ is depicted in Figure \ref{fig:SWE_Reef_WeightFunction}.

\begin{figure}[ht]
\centering
\subfloat[$B(x)$]{
\includegraphics[width=5.25cm]{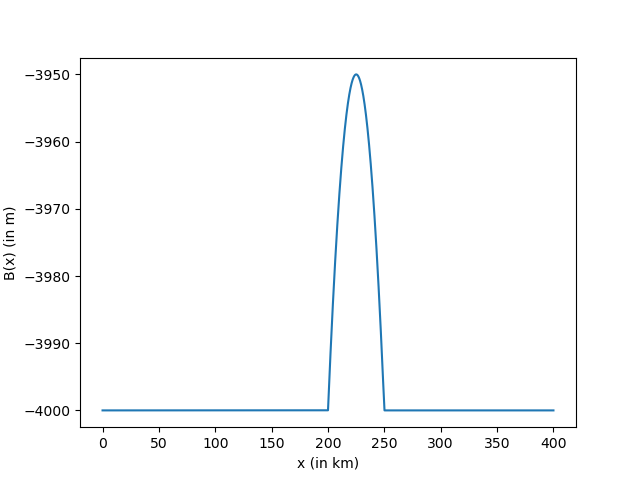}
\label{fig:SWE_Reef_Bathymetry}}
\hfill
\subfloat[$\eta_0(x)$]{
\includegraphics[width=5.25cm]{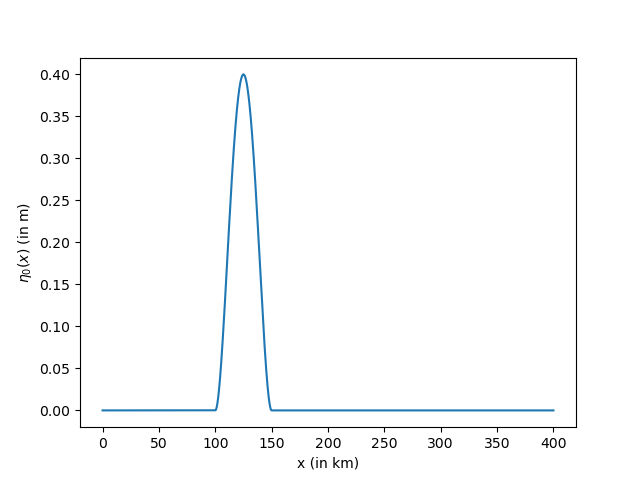}
\label{fig:SWE_Reef_InitialWaveHeight}}
\hfill
\subfloat[$w_1(x)$]{
\includegraphics[width=5.25cm]{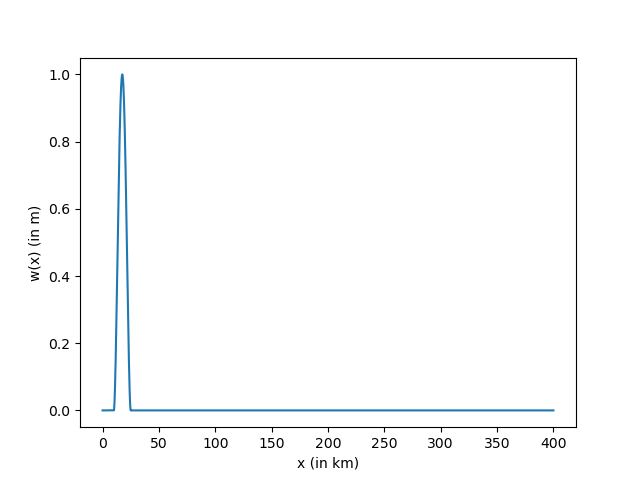}
\label{fig:SWE_Reef_WeightFunction}}
\caption{The bathymetry $B(x)$, initial wave height $\eta_0(x)$, and nonzero component of the weight function $w_1(x)$ for \S \ref{sec:SWE_Reef}.}
\end{figure}

The reference solution was computed on a mesh with $N_{\rm ref}=1280$ sub-intervals with the cG(3,3) method. The reference values of the functional $G(u; t)$ are plotted in Figure \ref{fig:Reference_QoI_SWE_Reef} where the critical value of $R=1000$ is presented as a dashed horizontal line. While this appears more complicated than Figures \ref{fig:Reference_QoI_SWE_Constant} and \ref{fig:Reference_QoI_SWE_Shelf} this is merely because the weight function is located close to a domain boundary and so $G(u;t)$ does not have time to ``recover'' and drop back to a small value before the reflected wave impinges on the critical region. The QoIs we consider are the times for the first, second and third occurrences of the event $G(u;t)=R$.


The initial conditions  for the adjoint problems  in \S \ref{sec:SWE} are
\begin{equation*}
\begin{aligned}
    \psi^{(1)}(x) &= (w_1(x),0)^{\top}, \\
    \psi^{(2)}(x) &= (0,{-}w_{1,x}(x))^{\top}.
\end{aligned}
\end{equation*}
The forward problem was solved using the cG(2,2) method (\S \ref{sec:Galerkin_Method}) on a series of increasingly fine uniform meshes. The computed times $t_c$ to the first, second and third occurrences of the event $G(u;t)=R$, errors in the QoI $e_Q$, and effectivity ratios $\rho_{\rm eff}$, are presented in Table \ref{tab:SWE_Reef}, where the effectivity ratios are all close to one.
\begin{table}[H]
    \centering
    \begin{tabular}{||c||c|c|c||c|c|c||c|c|c||}
    \hline
         & \multicolumn{3}{c||}{First event} & \multicolumn{3}{c||}{Second event} & \multicolumn{3}{c||}{Third event} \\
         \hline
         \hline
         $N$ & $t_c$ & $e_Q$ & $\rho_{\rm eff}$ & $t_c$ & $e_Q$ & $\rho_{\rm eff}$ & $t_c$ & $e_Q$ & $\rho_{\rm eff}$ \\
         \hline
         80  & 489.353 & -3.398 $\times 10^{0}$  & 1.016 & 615.540 & -9.503 $\times 10^{0}$  & 1.260 & 663.188 & -6.692 $\times 10^{0}$   & 0.934 \\
         160 & 485.870 & 8.481  $\times 10^{-2}$ & 1.000 & 607.105 & -1.069 $\times 10^{0}$  & 1.010 & 656.738 & -2.412 $\times 10^{-1}$  & 0.981 \\
         320 & 485.951 & 4.482  $\times 10^{-3}$ & 1.000 & 606.055 & -1.869 $\times 10^{-2}$ &  1.043 & 656.478 & 1.899  $\times 10^{-2}$  & 0.942 \\
         640 & 485.954 & 7.146  $\times 10^{-4}$ & 1.001 & 606.038 & -1.194 $\times 10^{-3}$ & 1.016 & 656.501 & -4.511 $\times 10^{-3}$  & 0.960 \\
         \hline
    \end{tabular}
    \caption{Computed times $t_c$ to the first, second and third occurrences of the event $G(u;t)=R$, error in the QoI $e_Q$, and effectivity ratio $\rho_{\rm eff}$, for \S \ref{sec:SWE_Reef}, using cG(2,2) to solve the forward problem and cG(4,4) to solve the adjoint problems. The first, second, and third events occur at $t_t = 485.95515$, $t_t = 606.03632$, and $t_t = 656.49655$ respectively.}
    \label{tab:SWE_Reef}
\end{table}


\section{Conclusions}
\label{sec:Conclusion}

We have developed and implemented an accurate adjoint-based \textit{a posteriori} error estimates for the time to an event, by which we mean the time for a functional of the solution to a time-dependent PDEs to achieve a threshold value. These estimates are based on the use of Taylor's Theorem on the functional defining the QoI, and multiple applications of standard adjoint-based error analysis, to achieve a computable estimate. Implementations for the 1D heat equation and the linearized shallow water equation provide examples of both parabolic and hyperbolic PDEs. The error estimates provides a basis for developing refinement strategies to reduce the error in the time to the event. A more detailed analysis is required to partition the error in to contributions arising from the independent spatial and temporal discretizations (see e.g., \cite{chaudhry2021posteriori}) and to determine the overall effect on the error from terms arising in both the numerator and denominator of \eqref{eq:error_estimate_computable}.

\section*{Acknowledgments}
J. Chaudhry’s and Z. Stevens's work is supported by the NSF-DMS 1720402.
D. Estep's and S. Tavener's work is supported by NSF-DMS 1720473.
D. Estep's work is supported by NSERC.

\bibliography{refs_new}
\bibliographystyle{plain}

\end{document}